\documentclass{article}

\usepackage{amsmath,amsfonts,amsthm,amssymb,amscd,color,xcolor,mathrsfs,verbatim,microtype}
\usepackage{graphicx,eurosym}
\usepackage{hyperref}
\usepackage{mathtools}

\usepackage[applemac]{inputenc}

\usepackage[cyr]{aeguill}

\colorlet{darkblue}{blue!50!black}

\hypersetup{
    colorlinks,%
    citecolor=blue,%
    filecolor=red,%
    linkcolor=darkblue,%
    urlcolor=blue,%
    pdfnewwindow=true,%
    pdfstartview={FitH}
}

\usepackage{graphicx,amscd,mathrsfs,wrapfig,mathrsfs,lipsum}
\usepackage{eufrak}
\usepackage{float}
\usepackage{tikz}
\usepackage{multicol}
\usepackage{caption}
\usetikzlibrary{arrows}
\usepackage{capt-of}

\colorlet{darkblue}{blue!50!black}

\binoppenalty=9999 \relpenalty=9999

\newcommand{\p}{\partial}
\newcommand{\e}{\varepsilon}

\newcommand{\C}{{\mathbb C}}

\DeclarePairedDelimiter\ceil{\lceil}{\rceil}

\newcommand{\mek}{{\mathbf 1}}

\newcommand{\R}{{\mathbb R}}
\newcommand{\Z}{{\mathbb Z}}

\newcommand{\pP}{{\mathbb P}}
\newcommand{\I}{{\mathbb I}}
\newcommand{\E}{{\mathbb E}}
\newcommand{\T}{{\mathbb T}}

\newcommand{\N}{{\mathbb N}}
\newcommand{\la}{\lambda}

\newcommand{\ty}{\infty}

\newcommand{\de}{\delta}

\newcommand{\EE}{{\cal E}}
\newcommand{\FF}{{\cal F}}

\newcommand{\HH}{{\cal H}}
\newcommand{\II}{{\cal I}}

\newcommand{\KK}{{\cal K}}

\newcommand{\RR}{{\cal R}}

\newcommand{\sS}{{\cal S}}
\newcommand{\TT}{{\cal T}}

\newcommand{\lag}{\langle}
\newcommand{\rag}{\rangle}

\newcommand{\dd}{{\textup d}}

\newcommand{\B}{{\mathbb B}}

\newcommand{\DB}{{\mathbb D}}

\newcommand{\lspan}{\mathop{\rm span}\nolimits}

\newcommand{\supp}{\mathop{\rm supp}\nolimits}

\theoremstyle{plain}

\newtheorem*{mta}{Theorem~\hypertarget{A}{A}}
\newtheorem*{mtb}{Theorem~\hypertarget{B}{B}}
\newtheorem*{mtc}{Theorem~\hypertarget{C}{C}}

\newtheorem*{lemma*}{Lemma}
\newtheorem{theorem}{Theorem}[section]

\newtheorem{proposition}[theorem]{Proposition}

\theoremstyle{definition}
\newtheorem{definition}[theorem]{Definition}

\theoremstyle{remark}

\numberwithin{equation}{section}

\begin{document}

\author{Alessandro Duca\,\footnote{Universit\'e Paris-Saclay, UVSQ, CNRS, Laboratoire de Math\'ematiques de Versailles, 78000, Versailles, France;  e-mail: \href{mailto:alessandro.duca@uvsq.fr}{Alessandro.Duca@uvsq.fr}} \and  Vahagn~Nersesyan\,\footnote{Universit\'e Paris-Saclay, UVSQ, CNRS, Laboratoire de Math\'ematiques de Versailles, 78000, Versailles, France;  e-mail: \href{mailto:vahagn.nersesyan@uvsq.fr}{Vahagn.Nersesyan@uvsq.fr}}}

 \date{\today}

\title{Bilinear  control  and growth of Sobolev norms     for   the  nonlinear  Schr\"odinger~equation}
\date{\today}
\maketitle
 
 \bigskip

 \bigskip

\begin{abstract}

We consider  the nonlinear Schr\"odinger equation (NLS) on a torus of arbitrary   dimension.~The equation is 
studied    in presence of an external      potential field  whose    time-dependent amplitude is taken as control. 
Assuming   that the   potential satisfies a   saturation property, we show that the NLS equation is approximately 
controllable between any pair of eigenstates in arbitrarily small  time.~The~proof is obtained by  developing a 
multiplicative version of  a geometric control approach  introduced by  Agrachev~and~Sarychev. We~give an application of this   result to the study of the large time   behavior of  the NLS equation  with random 
potential.~More~precisely, we~assume that the amplitude of the potential is a   
random process whose law is   $1$-periodic in time and non-degenerate.~Combining the controllability with  a    stopping time argument and the Markov property, we show that     the trajectories  of the random    equation are  almost surely unbounded in   regular Sobolev~spaces.

\medskip
\noindent
{\bf AMS subject classifications:}      35Q55,     35R60, 37L55, 81Q93, 93B05

\medskip
\noindent
{\bf Keywords:} Nonlinear Schr\"odinger equation, approximate   controllability, geometric control theory,     growth of Sobolev norms, random perturbation

\end{abstract}

\newpage
  \tableofcontents

\setcounter{section}{-1}

\section{Introduction}
\label{S:0}

 In this paper, we study the  controllability and the   growth of Sobolev norms for  the following  nonlinear 
Schr\"odinger (NLS) equation  on the     torus~$\T^d=\R^d/2\pi\Z^d$:
      \begin{equation}\label{0.1}
 	i \p_t \psi=-\Delta \psi +V(x)\psi  +\kappa  |\psi|^{2p} \psi +\lag u(t),Q(x) \rag \psi.   
 \end{equation}We assume that  $V:\T^d\to \R$ is an arbitrary smooth potential,   $Q:\T^d\to \R^q$ is a  given 
smooth   external        field
   subject to some geometric condition, $d,p\ge1$ are   arbitrary  
integers, and $\kappa $ is~an arbitrary   real number.~The role of~the~control (or~the~random perturbation) is  
played  by   $\R^q$-valued function (or random process) $u$ which is assumed to depend only on 
time.~Eq.~\eqref{0.1} is equipped with the initial condition 
  \begin{equation}\label{0.2}
  \psi(0,x)=\psi_0(x)	
  \end{equation}belonging to a Sobolev space $H^s=H^s(\T^d;\C)$ of order
  $s> d/2$, so that the problem   is  locally      well-posed. 
  
  The purpose   of this paper is to study the NLS equation~\eqref{0.1} when the driving   force $u$ acts  multiplicatively through only  few    low Fourier modes.~Referring the reader to the  subsequent sections  for the general setting, let us formulate in this Introduction   particular cases of our main  results.~Let $\KK\subset \Z^d_*$ be the set of $d$ vectors  defined~by
   \begin{equation}\label{0.3}
  	 \KK= \{(1,0,\ldots, 0),\, (0,1,\ldots, 0),\, \ldots,\,(0,0,\ldots, 1, 0),\, (1,\ldots, 1)\},
  	 \end{equation} 
  	 and assume that the    field~$Q=(Q_1, \ldots, Q_q)$ is such that    
\begin{equation}\label{0.4}
  \left\{ {\bf 1},\,
  \sin\lag x,k\rag,\, \cos\lag x,k\rag: k\in \KK\right\}\subset \lspan \{Q_j:j=1,\ldots, q\}.   	
\end{equation} Let $s_d$ be the   least integer strictly greater than $d/2$.
  \begin{mta}
  	The problem~\eqref{0.1},~\eqref{0.2} is approximately controllable in the following sense:    for any $s\ge s_d$,   $\e>0$,   $\varkappa>0$,    $\psi_0 \in   H^s$, and   $ \theta\in   C^\ty(\T^d;\R)$,   there is a time $T \in (0,\varkappa)$,   a control $u\in L^2([0,T]; \R^q)$,  and a unique       solution $\psi\in C([0,T]; H^s)$ of~\eqref{0.1},~\eqref{0.2} such that   
	$$ 
 \|\psi(T) - e^{i\theta}\psi_0\|_{H^s}<\e.
 $$  
   	  \end{mta}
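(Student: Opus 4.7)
The plan is to develop a multiplicative analogue of the geometric control scheme of Agrachev and Sarychev. The starting observation is that on a short interval $[0,\tau]$, a constant control $u(t) \equiv \tau^{-1}v$ with $v\in\R^q$ produces, up to errors of order $\tau$, the end-point transformation $\psi_0 \mapsto e^{-i\langle v,Q(x)\rangle}\psi_0$, because the contributions of $-\Delta$, $V$, and the nonlinearity $|\psi|^{2p}\psi$ are of lower order in $\tau$. Hence the directly accessible end-point transformations include phase multiplications $\psi_0\mapsto e^{-if}\psi_0$ for every $f$ in $\mathcal{H}_0 := \lspan\{Q_j : j=1,\ldots,q\}$; by hypothesis \eqref{0.4}, $\mathcal{H}_0$ already contains $\mathbf{1}$, $\sin\langle x,k\rangle$, and $\cos\langle x,k\rangle$ for $k\in\KK$.

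The core step is a saturation procedure that enlarges $\mathcal{H}_0$. I would define inductively a nested sequence $\mathcal{H}_n\subset C^\infty(\T^d;\R)$, where $\mathcal{H}_{n+1}$ is generated by $\mathcal{H}_n$ together with functions of the form $\nabla f_1\cdot\nabla f_2$ with $f_1,f_2\in\mathcal{H}_n$; this particular bilinear operation models the commutator structure that emerges when two multiplicative pulses are conjugated by an intermediate free NLS evolution. A direct trigonometric computation shows that the specific choice \eqref{0.3} of $\KK$ is tailored to this operation: the presence of the diagonal vector $(1,\ldots,1)$, whose inner product with every basis vector $e_j$ is nonzero, prevents $\nabla f_1\cdot\nabla f_2$ from vanishing on orthogonal frequencies, and iterating produces all lattice frequencies $\Z^d$, so that $\bigcup_n\mathcal{H}_n$ is dense in $C^\infty(\T^d;\R)$. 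The analytic counterpart of this algebraic step is an induction on $n$: assuming approximate realizability of multiplicative transformations by functions in $\mathcal{H}_n$, one proves the same for $\mathcal{H}_{n+1}$ via a convexification/relaxation argument, where a fast oscillating family of controls, interleaved with short free evolutions, is shown by a Duhamel expansion to reproduce the new generator to leading order.

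Combining the two steps with a final time-rescaling that packs the finitely many elementary blocks into the window $(0,\varkappa)$ yields the theorem: one approximates $\theta$ in $C^\infty$ by an element of some $\mathcal{H}_n$, and the associated piecewise control drives $\psi_0$ to within $\e$ of $e^{i\theta}\psi_0$ in $H^s$. The main obstacle is maintaining control of the $H^s$ norm along the construction, since the large-amplitude short pulses are unitary in $L^2$ but not a priori bounded in $H^s$: this requires commutator estimates on $e^{if}(-\Delta)e^{-if}$ and Moser-type bounds for the nonlinearity, combined with a bootstrap that keeps $\|\psi(t)\|_{H^s}$ uniformly bounded throughout the finite iteration. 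The phase-invariance of $|\psi|^{2p}\psi$ ensures that the nonlinearity commutes with the multiplicative factors and does not disturb the leading-order convexification, but its $H^s$ estimate at the regularity threshold $s\ge s_d>d/2$ remains the principal technical difficulty and is what ultimately dictates the admissible range of $s$.
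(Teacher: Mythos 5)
Your proposal follows essentially the same route as the paper: the constant-control pulse realizing $e^{-i\langle v,Q\rangle}\psi_0$ is the paper's Step~1 of Theorem~\ref{T:2.2} (Proposition~\ref{P:1.2} with $\varphi=0$), and your saturation step via $\nabla f_1\cdot\nabla f_2$ generated by conjugating a short free evolution with large phases is, up to polarization, exactly the paper's limit~\eqref{1.3} producing $\B(\varphi)=|\nabla\varphi|^2$ and the sequence $\HH_j=\FF(\HH_{j-1})$, with the same trigonometric/non-orthogonality argument (Proposition~\ref{P:2.5}) giving density and the same induction-plus-stability assembly. The sketch is correct in outline; only the sign bookkeeping (the limit yields $\theta_0-\sum_j\B(\theta_j)$, and plus signs are recovered through the identity $\B(\cos\langle x,m\rangle)+\B(\sin\langle x,m\rangle)=|m|^2\mathbf{1}$ with $\mathbf{1}\in\HH_0$) and the precise $\delta^{-1/2}$--$\delta$ scaling are left implicit.
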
 
   More general formulation of this result is given in Theorem~\ref{T:2.2}, where the controllability is proved under       an abstract        saturation   condition    for the field~$Q$      (see~Condition~(\hyperlink{H1}{\rm H$_1$})). Note~that the~time~$T$   may  depend  on the initial condition~$\psi_0$, the target~$e^{i\theta}\psi_0$, and  the 
     parameters in the equation.~In the second result, we show that, when~$V= 0$ and~$\psi_0$ is an 
eigenstate~$\phi_l(x)=(2\pi)^{-d/2}e^{i\lag x,l\rag},$~$l\in \Z^d$ of~the Laplacian, the system can be 
approximately controlled in   any fixed time~$T>0$
     to any target of the form~$e^{i\theta}\phi_m$ with    $m\in \Z^d$.
  	    \begin{mtb}
    For any $s\ge s_d,$ $\e>0$,   $l, m\in \Z^d$,~$ \theta\in   C^\ty(\T^d;\R)$,     and~$T>0$, there is a   control $u\in L^2([0,T]; \R^q)$ and a    unique  solution $\psi\in C([0,T]; H^s)$ of~\eqref{0.1},~\eqref{0.2} with  $V= 0$ and $\psi_0=\phi_l$ such that  
	$$ 
 \|\psi(T) - e^{i\theta}\phi_m\|_{L^2}<\e.
 $$   \end{mtb}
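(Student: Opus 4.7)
The plan is to reduce Theorem~B to Theorem~A applied on a short subinterval, combined with the free NLS evolution ($u\equiv 0$) on the complementary intervals. Two elementary observations are crucial. First, since $V=0$ and $|\phi_l|^{2p}\equiv(2\pi)^{-dp}$, the eigenstate $\phi_l$ evolves under~\eqref{0.1} with $u=0$ as $t\mapsto e^{-i\alpha_l t}\phi_l$ with $\alpha_l:=|l|^2+\kappa(2\pi)^{-dp}$, and by Sobolev embedding $H^s\hookrightarrow L^\infty$ a standard Gronwall argument shows that the free flow $S(\cdot)$ is $L^2$-Lipschitz on any bounded $H^s$-ball over short time intervals where the flow stays in that ball. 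Second, although the unit-modulus map $x\mapsto e^{i\theta(x)+i\langle x,\,m-l\rangle}$ on $\T^d$ has winding numbers $m-l$ (nontrivial when $m\ne l$) and hence cannot be written as $e^{i\hat\theta}$ for any continuous $\hat\theta:\T^d\to\R$, it can be approximated in $L^2$ by $e^{i\hat\theta}$ with $\hat\theta\in C^\infty(\T^d;\R)$ by concentrating the topological defect in a ball of small radius $r$; this yields an $L^2$-error $O(r^{d/2})$ at the cost of $\|\hat\theta\|_{H^s}\sim r^{-s}$, so the approximation necessarily fails in $H^s$ when $s>d/2$.

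Given $\varepsilon>0$, fix $\eta\in(0,\varepsilon/3)$ and pick $\hat\theta\in C^\infty(\T^d;\R)$ such that $\psi^*:=e^{i\hat\theta}\phi_l$ satisfies $\|\psi^*-e^{i\theta}\phi_m\|_{L^2}<\eta$. Local well-posedness of~\eqref{0.1} with $u=0$ then provides a positive existence time $T^*(\psi^*)$ together with an $L^2$-Lipschitz constant $L<\infty$ for $S(\cdot)$ on an $H^s$-ball containing the orbits of $\psi^*$ and $e^{i\theta}\phi_m$ up to time $T^*(\psi^*)$. Pick $\delta\in(0,T)$ so small that
(i)~$\delta<T^*(\psi^*)$,
(ii)~$\|S(\tau)(e^{i\theta}\phi_m)-e^{i\theta}\phi_m\|_{L^2}<\varepsilon/3$ for all $\tau\in[0,\delta]$, and
(iii)~$e^{L\delta}\eta<\varepsilon/3$.

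The control is then defined piecewise on $[0,T]$. On $[0,T-\delta]$ set $u\equiv 0$, so that $\psi(T-\delta)=e^{-i\alpha_l(T-\delta)}\phi_l=:\varphi_0$. On $[T-\delta,T-\delta+T_1]$ apply Theorem~A with initial state $\varphi_0$, target phase $\tilde\theta:=\hat\theta+\alpha_l(T-\delta)\in C^\infty(\T^d;\R)$ (chosen so that $e^{i\tilde\theta}\varphi_0=\psi^*$), parameter $\varkappa=\delta$, and $H^s$-tolerance $\varepsilon'$ small enough that $e^{L\delta}\varepsilon'<\varepsilon/3$; this yields some $T_1\in(0,\delta)$ and a control such that $\|\psi(T-\delta+T_1)-\psi^*\|_{H^s}<\varepsilon'$. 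Finally, on $[T-\delta+T_1,T]$ set $u\equiv 0$ again, so that $\psi(T)=S(\tau)\psi(T-\delta+T_1)$ with $\tau:=\delta-T_1\in(0,\delta)$. Writing
\[
\psi(T)-e^{i\theta}\phi_m=\bigl[S(\tau)\psi(T-\delta+T_1)-S(\tau)\psi^*\bigr]+\bigl[S(\tau)\psi^*-S(\tau)(e^{i\theta}\phi_m)\bigr]+\bigl[S(\tau)(e^{i\theta}\phi_m)-e^{i\theta}\phi_m\bigr]
\]
and using the $L^2$-Lipschitz bound for $S(\tau)$ (factor $e^{L\tau}\le e^{L\delta}$) together with (ii) and~(iii), each of the three $L^2$-summands is below $\varepsilon/3$, whence $\|\psi(T)-e^{i\theta}\phi_m\|_{L^2}<\varepsilon$.

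The main obstacle is the circular dependence between the choice of $\hat\theta$ and that of~$\delta$: refining the $L^2$ approximation (shrinking $\eta$) forces $\|\psi^*\|_{H^s}$ to blow up, which shortens $T^*(\psi^*)$ and enlarges the Lipschitz constant~$L$. The resolution is to fix $\eta$ (and thereby $\hat\theta$, $\psi^*$, and~$L$, all finite) \emph{before} choosing~$\delta$; condition~(iii), equivalent to $\delta<L^{-1}\log\bigl(\varepsilon/(3\eta)\bigr)$, is then nonvacuous precisely because $\eta<\varepsilon/3$, so all parameters can be chosen consistently.
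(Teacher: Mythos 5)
Your argument is correct in substance, but it implements the reduction to Theorem~A with a genuinely different time-adjustment device than the paper. The paper's proof (Theorem~\ref{T:2.4}) first applies Theorem~A at $\phi_l$ itself, producing a control of some length $T_1<T$ steering $\phi_l$ near $e^{i\theta_1}\phi_l$, where $\theta_1$ is chosen so that $\|e^{i\theta_1}\phi_l-e^{i\theta}\phi_m\|_{L^2}<\e/2$; it then pads the schedule at the beginning with the \emph{constant} control $u_0$ satisfying $\lag u_0,Q\rag=-|l|^2-\kappa(2\pi)^{-dp}$, which is available because $\mek\in\lspan\{Q_j\}$ and $V=0$ and which makes $\phi_l$ an exact stationary solution, so that concatenating $u_0$ on $[0,T-T_1]$ with the Theorem~A control lands at time exactly $T$ with no additional error terms. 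You instead let $\phi_l$ rotate freely under $u=0$, apply Theorem~A inside a terminal window of length $\delta$ with the accumulated phase $\alpha_l(T-\delta)$ absorbed into the target, and dispose of the leftover time $\delta-T_1$ by a short free evolution, estimated through an $L^2$-Lipschitz bound for the flow on $H^s$-bounded sets (a standard Gronwall argument, valid since $s>d/2$, but not stated in the paper) together with continuity of the flow at $e^{i\theta}\phi_m$. Your ordering of the choices (fix $\eta$, $\hat\theta$, $L$ before $\delta$, then the Theorem~A tolerance) correctly breaks the circularity that the paper's stationary-control trick sidesteps entirely. What each approach buys: the paper's is shorter and error-free in the padding step, while yours never uses the assumption $\mek\in\lspan\{Q_j\}$ in \eqref{2.3} beyond $V=0$, at the price of the extra stability lemma.

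One side remark needs correcting. For $d\ge2$ you cannot make $e^{i\hat\theta}$ pointwise close to $e^{i\theta(x)+i\lag x,\,m-l\rag}$ outside a small ball: non-contractible loops avoiding a ball still detect the winding $m-l$, so the two functions must be antipodal somewhere on every such loop. The topological defect has to be concentrated near codimension-one slabs rather than a ball; for instance $\hat\theta=\theta+\sum_j(m_j-l_j)\rho_r(x_j)$ with $\rho_r$ a smoothed $2\pi$-periodic sawtooth equal to the identity outside an $r$-neighbourhood of the seam gives $\|e^{i\hat\theta}\phi_l-e^{i\theta}\phi_m\|_{L^2}=O(r^{1/2})$. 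The approximation fact you rely on is therefore true (the paper invokes it without proof), but your stated construction and the $O(r^{d/2})$ rate are incorrect when $d\ge2$; this is a fixable inaccuracy, not a gap in the structure of the proof.
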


 The controllability of the Schr\"odinger  equation with time-dependent bilinear (multiplicative) control  has 
attracted a lot of attention during the last fifteen years.~In~the one-dimensional case, local exact 
controllability results are   established by Beauchard, Coron, and Laurent~\cite{B-2005, BC-2006, 
BL-2010}.~There~is a vast literature on  the approximate controllability  in the multidimensional case. 
For~the~first achievements, we refer the reader to the papers by Boscain et~al.~\cite{CMSB-2009,  BCCS-12}, 
Mirrahimi~\cite{M-09},        and the second author~\cite{VN-2010}. Except the paper~\cite{BL-2010}, all the other  
 works    consider the linear Schr\"odinger equation, i.e., the one obtained  
by taking~$\kappa=0$ in Eq.~\eqref{0.1}; note~that   in that case  the control problem is still nonlinear in $u$.

   Theorems~A and~B are the first to deal with  the problem of bilinear approximate controllability of the NLS equation.~Let 
us emphasise that the   controllability between any pair of eigenstates in arbitrarily small  time is  new    even 
in the linear case~$\kappa=0$.~It~is interesting to note that Theorem~B complements a   result by Beauchard et 
al.~\cite{BCT2}, which proves that, for some choices of the field $Q$, there is a minimal time for the approximate controllability to some particular states in the phase space.

  The approach adopted in   the proofs of    Theorems~A and~B is quite different from those   used in the 
literature for bilinear control systems. We proceed by developing  Agrachev--Sarychev type arguments which were 
previously  employed  in the case of additive     controls. Let us recall that Agrachev and 
Sarychev~\cite{AS-2005, AS-2006} considered the global approximate controllability of the 2D~Navier--Stokes and 
Euler systems.~Their approach  has been further extended by many authors to different equations.    Let us mention, 
for example, the papers~\cite{shirikyan-cmp2006, shirikyan-aihp2007} by Shirikyan who   considered the approximate 
controllability of
    the 3D~Navier--Stokes system    and Sarychev~\cite{Sar-2012} who considered   the  case of the  2D defocusing 
cubic Schr\"odinger equation.~The~configuration we use in the present paper is closer to the one elaborated in the 
recent paper~\cite{VN-2019A}, where parabolic PDEs are studied with polynomial nonlinearities.~We refer the reader 
to   the reviews~\cite{AS-2008, Shir-2018} and the paper~\cite{VN-2019A} for more references and discussions.

The present paper is the first to deal with Agrachev--Sarychev type arguments in a bilinear setting.  
  To explain the scheme   of the proof  of Theorem~A, let~us denote by $\RR_t(\psi_0, u)$ the solution of problem~\eqref{0.1},~\eqref{0.2}   defined up to some maximal time. A central role in the proof is played by the limit
  \begin{equation}\label{0.5}
 	e^{-i\delta^{-1/2}\varphi} \RR_{\de}(e^{i\delta^{-1/2}\varphi}\psi_0,\de^{-1} u)\to  e^{-i \left(\B(\varphi)+\lag u,Q \rag\right)}\psi_0  \quad\text{in $H^s$ as $\de\to 0^+$}
\end{equation}which holds for any $\psi_0\in H^s$, $\varphi\in   C^\ty(\T^d;\R)$, and constant   $u\in \R^q$. Here  we denote	$\B(\varphi)(x)=\sum_{j=1}^d\left(\p_{x_j} \varphi(x)\right)^2$.~Applying this limit with $\varphi=0$ and using the assumption~\eqref{0.4},  we see that the equation  can be controlled in small time from initial point~$\psi_0$ arbitrarily close to       $e^{i\theta}\psi_0$ for  any~$\theta$ in     the vector~space
$$
\HH_0  =\lspan\left\{ {\bf 1},\,
  \sin\lag x,k\rag,\, \cos\lag x,k\rag: k\in \KK\right\}.
$$ 
  By applying again the limit~\eqref{0.5} with   functions $\varphi=\theta_j \in\HH_0 $,   $j=1,\ldots, n$, 
we add more directions in $\theta$.  That is, we show that the system can be steered   from~$\psi_0$   close to~$e^{i\theta}\psi_0$, where~$\theta$  now belongs to a larger vector  space~$\HH_1$  whose elements are of the~form 
$$
 \theta_0-\sum_{j=1}^n \B(\theta_j).
$$
We iterate this argument and   construct an increasing   sequence of subspaces~$\{\HH_j\}$ 
such that the equation can be approximately controlled to any target~$e^{i\theta}\psi_0$
 with any $\theta \in  \HH_j$ and $j \ge1$.~Using    trigonometric computations, we show that  
   the union~$\cup_{j=1} \HH_j $ is dense in~$C^k(\T^d,\R)$ for any $k\ge1$ (in~other words,~$\HH_0$ is a   saturating space for the NLS equation, see Definition~\ref{D:2.1}). This~completes the proof of~Theorem~A. 
   
   Theorem~B is derived from Theorem~A by noticing that the eigenstate~$\phi_l$  can be approximated in~$L^2$ 
by functions of the form~$e^{i\theta}\phi_m$ and that the eigenstates are  constant solutions\footnote{This follows immediately  from the assumptions  that~$V= 0$ and~$\mek \in 
\HH_0$.} of 
Eq.~\eqref{0.1} corresponding to some  control. This   allows to    appropriately adjust the controllability  time  and choose  it the same for any initial 
 condition and   target.

 As an application of Theorem~A, we study the large time behavior of the trajectories of  the  random NLS equation.~We show that if a random process perturbes   the same    Fourier modes as in the above controllability results, 
  then  the   energy is almost surely    transferred   to higher modes resulting in the unboundedness of the 
trajectories in regular  Sobolev spaces.~More precisely, we replace the control~$u$   by an~$\R^q$-valued random 
process~$\eta$   of the~form
\begin{equation}\label{0.6}
\eta(t)=\sum_{k=1}^{+\infty} \I_{[k-1,k)}(t) \eta_k(t-k+1),
\end{equation}where $\I_{[k-1,k)}$ is the indicator function of the interval $[k-1,k)$ and 
  $\{\eta_k\}$ are   independent identically distributed   random variables in~$L^2([0,1];\R^q)$ with non-degenerate law (see Condition~(\hyperlink{H2}{\rm H$_2$})).~The solution $\psi$ of the problem~\eqref{0.1},~\eqref{0.2}, \eqref{0.6} will be itself a random process in~$H^s$.
   We prove the following result.

   \begin{mtc}
  	For any $s>s_d$ and any non-zero~$\psi_0\in H^s$, the trajectory of~\eqref{0.1}, \eqref{0.2}, \eqref{0.6} is almost surely unbounded in~$H^s$.
  	 \end{mtc}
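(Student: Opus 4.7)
The plan is to argue by contradiction, combining Theorem~A with the Markov structure inherited from the $1$-periodic i.i.d.\ noise pieces $\{\eta_k\}$ and a compactness/lower-semicontinuity argument on phase space. Put $r = \|\psi_0\|_{L^2} > 0$; conservation of $L^2$ by~\eqref{0.1} confines the trajectory to $S_r = \{\phi : \|\phi\|_{L^2} = r\}$. Assume for contradiction that $\IP(\sup_{t\ge 0} \|\psi(t)\|_{H^s} < \infty) > 0$, so that $\IP(A_M) > 0$ for some $M \ge \|\psi_0\|_{H^s}$, where $A_M = \{\sup_{t\ge 0} \|\psi(t)\|_{H^s} \le M\}$. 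Since the $\eta_k$ are i.i.d., $\{\psi(k)\}_{k\in\N}$ is a Markov chain on $H^s$; with $\FF_n = \sigma(\eta_1,\ldots,\eta_n)$, the Markov property and L\'evy's $0$--$1$ law give $\IP(A_M \mid \FF_n) = \mathbf{1}_{A_M^{[0,n]}} f(\psi(n)) \to \mathbf{1}_{A_M}$ a.s., where $f(\phi) := \IP_\phi(A_M)$; consequently, on $A_M$ one has $g(\psi(n)) \to 0$ a.s., where $g(\phi) := \IP_\phi(\sup_{t\in[0,1]}\|\psi(t)\|_{H^s} > M)$.

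The core of the argument is to bound $g$ below by a positive constant on $B_M \cap S_r$, with $B_M = \{\|\cdot\|_{H^s} \le M\}$, contradicting $g(\psi(n)) \to 0$. For the pointwise positivity $g(\phi) > 0$, fix $\phi \in B_M \cap S_r$ and pick $\theta_0 \in C^\infty(\T^d;\R)$ whose gradient vanishes only on a Lebesgue null set, e.g.\ $\theta_0(x) = \sum_{j=1}^d \sin x_j$. Then $c(\phi) := \int_{\T^d} |\nabla\theta_0|^{2s} |\phi|^2\,dx > 0$, and expanding the derivatives of $e^{iN\theta_0}\phi$ yields $\|e^{iN\theta_0}\phi\|_{H^s} \ge C_1 N^s \sqrt{c(\phi)} - C_2(\phi,M)$, so one can choose $N = N(\phi,M)$ with $\|e^{iN\theta_0}\phi\|_{H^s} > 3M$. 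Theorem~A, applied with datum $\phi$, target $e^{iN\theta_0}\phi$, $\varkappa = 1$, and $\e = 1$, produces $T_\phi \in (0,1)$ and a control $u^\phi$ such that $\|\psi(T_\phi;\phi,u^\phi)\|_{H^s} > 2M$. Extending $u^\phi$ arbitrarily to $[0,1]$, continuity of the flow in the $L^2$-norm of the control gives a neighborhood of $u^\phi$ inside which the supremum on $[0,T_\phi]$ still exceeds $M$; by the non-degeneracy Condition~(H$_2$) on the law of $\eta_1$, this neighborhood has positive probability, so $g(\phi) > 0$.

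For the uniform lower bound, I would prove that $g$ is lower semicontinuous on $B_M$ in the $H^{s'}$ topology for some fixed $s' \in (d/2,s)$. For $\phi_n \to \phi$ in $H^{s'}$ with $\{\phi_n\} \subset B_M$ and a fixed noise sample, well-posedness in $H^{s'}$ gives strong convergence of trajectories in $C([0,T];H^{s'})$ on common intervals of existence, and the uniform $H^s$ bound combined with Banach--Alaoglu gives weak $H^s$ convergence at each time. Weak lower semicontinuity of $\|\cdot\|_{H^s}$ then implies that any $t^*$ with $\|\psi(t^*;\phi,\eta)\|_{H^s} > M$ is eventually inherited by the $\phi_n$-trajectories, and Fatou's lemma delivers $g(\phi) \le \liminf_n g(\phi_n)$. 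By the Rellich embedding, $B_M$ is compact in $H^{s'}$, and since $S_r$ is closed in $L^2$, $B_M \cap S_r$ remains compact; a lower semicontinuous, strictly positive function on a compact set attains a positive infimum $p_0 > 0$. The contradiction $p_0 \le g(\psi(n)) \to 0$ on the positive-probability event $A_M$ concludes the proof.

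The main difficulty is organizing this compactness/semicontinuity dichotomy: the natural $H^s$ topology, in which Theorem~A is formulated and in which the event defining $g$ lives, does not render $B_M$ compact, so one must pass to the weaker $H^{s'}$ topology (where Rellich provides compactness) while exploiting weak $H^s$ convergence together with the weak lower semicontinuity of the $H^s$ norm to transport the $H^s$-valued event between nearby starting points. Verifying that the flow is continuous in $H^{s'}$ with the required uniform $H^s$ bounds is a standard consequence of well-posedness, but requires care with possible blow-up times, which however only helps us since blow-up automatically realizes the event defining~$g$.
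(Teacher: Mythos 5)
Your proposal is correct in outline, but it follows a genuinely different route from the paper, so let me compare. Both arguments rest on the same two pillars: (i)~from any state on the $L^2$-sphere whose $H^s$ norm is at most $M$, the controllability result (your Theorem~A; the paper uses its corollary, Theorem~\ref{T:2.3}), the non-degeneracy of the law of $\eta_1$ from Condition~(\hyperlink{H2}{H$_2$}), and the continuity of the resolving operator in the control (Proposition~\ref{P:1.1}) give a positive probability of exceeding the level $M$ within one noise period; (ii)~Rellich compactness of the $H^s$-ball in a weaker Sobolev norm upgrades this pointwise positivity to a uniform lower bound. The paper then introduces the stopping time $\tau_{M,R}$, proves the uniform estimate $\sup_{\psi_0}\pP_{\psi_0}\{\tau_{M,R}>1\}=c<1$ over the whole sphere (the case $\|\psi_0\|_s>M$ being trivial), iterates it through the Markov property to get $\pP_{\psi_0}\{\tau_{M,R}>n\}\le c^n$, and concludes by Borel--Cantelli; you instead argue by contradiction via L\'evy's $0$--$1$ law and lower semicontinuity of $g$, which only requires positivity on the compact set $B_M\cap S_r$. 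Your device of combining strong $H^{s'}$ convergence with weak lower semicontinuity of the $H^s$ norm plays exactly the role of the paper's simpler trick: there, controllability is applied in $H^{s_d}$, continuity and the finite cover are taken in $H^{s_d}$, and one concludes with $\|\cdot\|_{s_d}\le\|\cdot\|_s$. The paper's variant also spares the norm-inflation computation for $\|e^{iN\theta_0}\phi\|_{H^s}$: your expansion with the weight $|\nabla\theta_0|^{2s}$ is only literal for integer $s$, whereas pushing the $H^1$ or $H^{s_d}$ norm above $M$ and using monotonicity of the Sobolev norms (as in Theorem~\ref{T:2.3}) avoids fractional-order derivatives altogether.

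The one step you assert without justification, and which the paper treats with care, is the Markov property. The NLS flow is only locally well-posed in $H^s$, so $\{\psi(k)\}_{k\in\N}$ is not a priori a globally defined Markov chain, and the factorization $\pP(A_M\mid\FF_n)=\mathbf 1_{A_M^{[0,n]}}\,f(\psi(n))$ (together with the measurability of $f$ and $g$) needs an argument on the event where no blow-up has occurred. The paper resolves this by passing to the truncated equation~\eqref{5.4}, whose solutions are global, proving the Markov property there (Proposition~\ref{P:5.2}), and transferring the conclusion back using $R>M$. In your scheme you should either perform the same truncation before invoking L\'evy's law, or extend the state space by a cemetery point and verify the conditional-probability identity directly; with such a fix, and with the fractional-$s$ point handled as above, your argument closes.
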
  
  The idea of constructing unbounded solutions by using random perturbations is not new.~Such results have been obtained by Bourgain~\cite{Bourgain-1999} and  Erdogan et al.~\cite{EBK-2003} for linear one-dimensional Schr\"odinger equations.~They also provided polynomial lower bounds   for        the growth.~Unboundedness of trajectories for   multidimensional linear Schr\"odinger equations  
  	  is obtained in~\cite{VN-2009}.~In~that paper, the assumptions on the law of the random perturbation are rather general and   no estimates for the growth are given; Theorem~C is a generalisation of that result to the  case of  the NLS equations.~There are also   examples of linear Schr\"odinger equations with various deterministic time-dependent potentials which admit unbounded trajectories:~e.g., see the papers by Bambusi et al.~\cite{MR3738262}, Delort~\cite{Delort-14}, Haus and   Maspero~\cite{HM-20,Maspero-19}, and the references therein.

  	    	  There are only few results   in the case of   unperturbed NLS equations. For~cubic defocusing Schr\"odinger equations on bounded domains or manifolds, the existence of unbounded trajectories in regular Sobolev spaces is a challenging open problem (see Bourgain~\cite{Bourgain-2000}).~In different situations,   existence of trajectories with arbitrarily large finite
  	    	   growth has been shown by Kuksin~\cite{SBK-97}, 
 Colliander et al.~\cite{CKSTT-2010}, Guardia and Kaloshin~\cite{GK-2015}, and others.~Hani et al.~\cite{HPT-2015}    show the existence of unbounded trajectories in the case of the cubic defocusing Schr\"odinger equation   on the infinite cylinder~$\R\times\T^d$. In the case of the cubic Szeg\H{o} equation on the circle,  G\'{e}rard and Grellier~\cite{GG-2017} show that the trajectories are generically unbounded in Sobolev spaces. Moreover, they exhibit   the existence of a family  of solutions with superpolynomial growth.

 Let us give a brief (and not entirely  accurate) description of the main ideas of the proof of 
    Theorem~C. By starting  from any initial point~$\psi_0\in H^s$,      Theorem~A allows to increase  the Sobolev 
norms      by choosing   appropriately the control.  This,~together with a compactness argument and the assumption 
that the law of the process~$\eta$ is non-degenerate,
 leads to    a uniform estimate of the form
  $$
 c_M= \sup_{\psi_0\in H^s}\pP\left\{\sup_{t\in[0,1]}\|\psi(t)\|_{H^s}>M\right\}<1 
  $$ for any $M>0$. By combining the latter with the Markov property, we show that
  $$
   \pP\left\{\sup_{t\in[0,n]}\|\psi(t)\|_{H^s}>M\right\}\le c_M^n
  $$for any $\psi_0\in H^s$.
   Then, the Borel--Cantelli lemma implies that the norm of any trajectory becomes almost surely larger than  $M$   
 in some random   time that is almost  surely finite. As~$M$ is arbitrary, this proves the required result.

 The paper is organised as follows.~In Section~\ref{S:1}, we discuss  the    local  well-posedness and some stability properties  of  the NLS equation.~In Section~\ref{S:2}, we~formulate more general versions of Theorems~A and~B and give their proofs.  Section~\ref{S:3} is devoted to the derivation of limit~\eqref{0.5}.~In Section~\ref{S:4}, we establish a general criterion for the  validity of the    saturation property.~Finally, in Section~\ref{S:5}, we~prove~Theorem~C.

 \subsubsection*{Acknowledgement}  
 
 The authors thank Armen Shirikyan for his valuable comments.
 The research of AD  was supported by the ANR     grant ISDEEC
ANR-16-CE40-0013.~The research of VN was supported by the ANR   grant NONSTOPS
ANR-17-CE40-0006-02.

 \subsubsection*{Notation}

  In what follows, we use the following notation.
 
\smallskip
\noindent
 $\lag \cdot,  \cdot\rag$ is the Euclidian   scalar product in $\R^q$ and $\|\cdot\|$ is the corresponding norm. We   write $m \bot l$ when the vectors~$m, l\in \R^q$ are orthogonal and $m \not\perp l$ when they are not.

\smallskip
\noindent
$H^s=H^s(\T^d;\C),~s\ge 0$ and $L^p=L^p(\T^d;\C),~p\ge 1$ are the standard     Sobolev and Lebesgue  spaces of functions $f:\T^d\to \C$  endowed with the    norms~$\|\cdot\|_s$ and~$\|\cdot\|_{L^p}$.  The space $L^2$ is endowed with the scalar product 
$$
\lag f,g \rag_{L^2}=\int_{\T^d} f(x)\overline {g(x)} \dd x.
$$

  \noindent
$C^s=C^s(\T^d;\C)$, $s\in \N\cup \{\ty\}$ is the space of $s$-times continuously differentiable functions $f:\T^d\to \C$.

\smallskip
\noindent
Let  $X$ be a Banach  space. We denote by $B_X(a,r)$    the closed ball of radius $r>0$ centred at $a\in X$.

 \smallskip
\noindent
We   write $J_T$ instead of $[0,T]$ and $J$ instead of $[0,1]$.

\smallskip
\noindent
$C(J_T;X)$  is the space of continuous functions $f:J_T\to X$   with the norm 
$$
\|f\|_{C(J_T;X)}=\max_{ t\in  J_T} \|f(t)\|_X.
$$ 

 \noindent
$L^p(J_T;X), 1\leq p<\infty$   is      the
space of Borel-measurable functions $f: J_T \to  X$ with  
$$
\|f\|_{L^p(J_T;X)}=\left(\int_{0}^T \|f(t)\|_X^p\dd t
\right)^{1/p}<\ty.
$$

\noindent
  $\ceil x $ is the least integer greater than or equal to  $x \in \R$.

\smallskip
\noindent
$s_d$ is the   least integer strictly greater than $d/2$.

\smallskip
\noindent
    ${\bf 1}$ is the  function identically equal~to~$1$ on $\T^d$.
 
  \section{Preliminaries}\label{S:1}

 In this section, we   consider  the    NLS  equation~\eqref{0.1},
 where~$u$ is a   deterministic $\R^q$-valued  function and  $V:\T^d\to \R$ and $Q:\T^d\to \R^q$ are  arbitrary smooth functions. In~what follows, we shall always assume that the parameters~$d\ge1$,~$p\ge1$, and~$\kappa\in \R $   are~arbitrary.~Here we formulate   two propositions that will be used in the proofs of our main results.~The first one
  gathers some well-known facts about the local well-posedness and stability of the NLS equation in regular Sobolev~spaces.
       \begin{proposition}\label{P:1.1}
For any~$s>d/2$,    $\hat \psi_0\in H^s$, and   $\hat u\in L^2_{loc}
(\R_+; \R^q)$,   there is a maximal time     $\TT=\TT(\hat \psi_0, \hat u)>0$ and a unique solution $\hat \psi$ of the problem \eqref{0.1},~\eqref{0.2} with~$(\psi_0, u)=(\hat\psi_0, \hat u)$     whose restriction to  the interval $J_T$  belongs  to~$C(J_T; H^s)$ for any $T<\TT$.~If~$\TT<\ty$, then~$\|\hat \psi(t)\|_s\to +\infty$  as $t\to \TT^-$. Furthermore, for any~$T<\TT$,  there are   
  constants
 $\delta=\delta(T,\Lambda)>0$  and $C=C(T,\Lambda)>0$, where   
$$
\Lambda= \|\hat \psi\|_{C(J_T;H^s)}+\|\hat u\|_{L^2(J_T;\R^q)},
$$
such that the following two properties hold.

\begin{enumerate}
\item[(i)] For any $\psi_0\in H^s $   and $u\in L^2(J_T; \R^q)$  satisfying  
\begin{equation}\label{1.1}
\|\psi_0-\hat \psi_0\|_s+ \|u-\hat u\|_{L^2(J_T;\R^q)}  <
\delta,
\end{equation}
  the problem \eqref{0.1},~\eqref{0.2} has a unique solution
$\psi\in C(J_T;H^s).$
\item[(ii)] 
  Let $\RR$ be the resolving operator for Eq.~\eqref{0.1}, i.e., the mapping  taking a couple $(\psi_0,u)$ satisfying \eqref{1.1} to the solution~$\psi$.~Then     
 \begin{align*}
\|\RR(\psi_0, u) -\RR(\hat \psi_0,\hat u)\|_{C(J_T;H^s)}&\le 
C \left( \|\psi_0-\hat \psi_0\|_s+
\|u-\hat u\|_{L^2(J_T; \R^q)} \right).
 \end{align*}
 \end{enumerate}
\end{proposition}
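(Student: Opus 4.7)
The plan is a standard Duhamel-based fixed point argument, relying on two facts: the free Schr\"odinger propagator $e^{it\Delta}$ is an isometry on every $H^s$, and $H^s$ is a Banach algebra for $s>d/2$. First I would rewrite \eqref{0.1}--\eqref{0.2} as the integral equation
$$
\psi(t)=e^{it\Delta}\psi_0-i\int_0^t e^{i(t-\tau)\Delta}\big[V\psi(\tau)+\kappa|\psi(\tau)|^{2p}\psi(\tau)+\lag u(\tau),Q\rag\psi(\tau)\big]\,d\tau,
$$
and interpret its right-hand side as a map $\Phi_{\psi_0,u}$ on $C(J_T;H^s)$. The three nonlinear terms are all estimated via the algebra inequality: $\|V\psi\|_s\le C_s\|V\|_s\|\psi\|_s$, $\||\psi|^{2p}\psi\|_s\le C_{s,p}\|\psi\|_s^{2p+1}$, and $\|\lag u(\tau),Q\rag\psi\|_s\le C_s\|Q\|_s\|u(\tau)\|\|\psi\|_s$. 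Integrating in $\tau$ on $J_T$, the first two terms contribute factors of $T$, while the last is controlled by Cauchy--Schwarz to yield a factor $T^{1/2}\|\hat u\|_{L^2(J_T;\R^q)}$. Setting $R:=2\|\hat\psi_0\|_s+1$ and choosing $T$ small enough in terms of $R$, $\|V\|_s$, $\|Q\|_s$, and $\|\hat u\|_{L^2}$, the map $\Phi_{\hat\psi_0,\hat u}$ becomes a $\tfrac12$-contraction on the closed ball of radius $R$ in $C(J_T;H^s)$ centred at $e^{it\Delta}\hat\psi_0$, yielding the unique local solution.

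The maximal interval $[0,\TT)$ is then defined as the union of all intervals on which a local solution exists, and the blow-up alternative follows by contradiction. If $\TT<\infty$ and $\sup_{t<\TT}\|\hat\psi(t)\|_s\le M<\infty$, the local argument restarted from any time $t_0<\TT$ produces a life-span depending only on $M$ and on $\|\hat u\|_{L^2([t_0,\TT+1];\R^q)}$, both of which stay uniformly bounded. Picking $t_0$ sufficiently close to $\TT$ extends the solution past $\TT$, contradicting the maximality of~$\TT$.

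For assertion (i) I would rerun the contraction argument on $J_T$ with perturbed data $(\psi_0,u)$ satisfying \eqref{1.1}: provided $\delta$ is small enough, the ball of radius $R$ remains invariant under $\Phi_{\psi_0,u}$ and the map is still contractive, giving existence on the full interval $J_T$. For assertion (ii) I would subtract the two Duhamel identities for $\psi$ and $\hat\psi$ and use the same algebra and Cauchy--Schwarz bounds to reach an inequality of the form
$$
\|\psi-\hat\psi\|_{C(J_t;H^s)}\le C_1\big(\|\psi_0-\hat\psi_0\|_s+\|u-\hat u\|_{L^2(J_T;\R^q)}\big)+C_2\int_0^t\big(1+\|u(\tau)\|\big)\|\psi-\hat\psi\|_{C(J_\tau;H^s)}\,d\tau,
$$
and then apply Gronwall's lemma with the $L^1_t$ weight $1+\|u(\tau)\|$ to deduce the desired Lipschitz bound with constant $C=C(T,\Lambda)$.

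The main technical point throughout is that $u$ is only assumed to lie in $L^2_t$, so the multiplicative coefficient $\lag u(\tau),Q\rag$ is not uniformly bounded in time; the remedy, used consistently above, is to trade an $L^\infty_t$ bound for the Cauchy--Schwarz factor $T^{1/2}\|u\|_{L^2}$ in the contraction step and for the integrable weight $\|u(\tau)\|$ in Gronwall. Otherwise the argument is routine: no dispersive smoothing from the Schr\"odinger group is exploited, the algebra property of $H^s$ does all the work on the nonlinearities, and the proof is uniform in the parameters $d$, $p$, $\kappa$.
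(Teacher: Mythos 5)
The paper itself omits the proof of Proposition~\ref{P:1.1} and refers to Cazenave/Tao; your local theory (Duhamel formulation, isometry of $e^{it\Delta}$ on $H^s$, the algebra property for $s>d/2$, Cauchy--Schwarz in time to handle $u\in L^2$, and the restart argument for the blow-up alternative, correctly using that $\|\hat u\|_{L^2([t_0,\TT+1])}$ is uniformly bounded) is exactly that standard route and is fine as far as local existence, uniqueness and the blow-up alternative are concerned.

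The genuine gap is in how you get (i) and (ii) on the \emph{whole} interval $J_T$ for an arbitrary $T<\TT$. You claim that for $\delta$ small the map $\Phi_{\psi_0,u}$ is still a contraction on the ball of radius $R=2\|\hat\psi_0\|_s+1$ in $C(J_T;H^s)$. That cannot work: the contraction constant is of size $T\bigl(1+\|V\|_s+R^{2p}\bigr)+T^{1/2}\|u\|_{L^2(J_T;\R^q)}$, and shrinking $\delta$ (which only measures the distance of the data) does not make it small when $T$ is not small; moreover for large $T$ the reference solution itself need not stay in that ball, since $\|\hat\psi\|_{C(J_T;H^s)}=\Lambda$ may far exceed $R$. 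The Gronwall step for (ii) has the same circularity: the constant $C_2$ in your integral inequality necessarily depends on a bound for $\|\psi\|_{C(J_t;H^s)}$ (through the difference estimate for $|\psi|^{2p}\psi$), i.e.\ on precisely the global-in-$J_T$ control of the perturbed solution that (i) was supposed to provide. The standard repair is to prove (i) and (ii) simultaneously by a continuation/bootstrap argument: let $t^*$ be the supremum of $t\le T$ such that the perturbed solution exists on $[0,t]$ and $\|\psi-\hat\psi\|_{C([0,t];H^s)}\le 1$; on $[0,t^*)$ both solutions are bounded by $\Lambda+1$, so your Gronwall inequality applies with constants depending only on $T$ and $\Lambda$ and yields $\|\psi-\hat\psi\|\le C(T,\Lambda)\,\delta$; choosing $\delta<\min\{1,(2C(T,\Lambda))^{-1}\}$ and invoking the blow-up alternative shows $t^*=T$, which gives both (i) and (ii). (Equivalently, cover $J_T$ by finitely many intervals of the small length furnished by the local theory at level $\Lambda+1$ and propagate the stability estimate inductively.) With this modification your argument is complete; as written, the step ``small $\delta$ makes $\Phi_{\psi_0,u}$ contractive on all of $J_T$'' is false.
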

The proof of this proposition is rather     standard, so we omit it   (e.g., see  Section~3.3 in~\cite{TT-2006} or Section~4.10 in~\cite{C-2003} for similar results).~Let $\sS$ be the  unit sphere in  $L^2$. As the functions~$V, Q$, and~$u$ are real-valued, the solution~$\psi$   belongs   to~$\sS$ throughout its lifespan, provided that~$\psi_0\in \sS \cap H^s$.

 Before  formulating  the second proposition, let us introduce some notation. 
For~any~$\psi_0\in  H^s$ and $T>0$, let~$\Theta(\psi_0,T)$ be the set of functions~$u\in
L^2(J_T;\R^q)$ such that the problem~\eqref{0.1},~\eqref{0.2} has
a solution in~$C(J_T;H^s)$. By the previous   proposition,   the set   $ \Theta(\psi_0,T)$ is      open     in $L^2(J_T;\R^q)$.~For any   $\varphi \in C^1(\T^d;\R)$, let
 \begin{equation}\label{1.2}
 	\B(\varphi)(x)=\sum_{j=1}^d\left(\p_{x_j} \varphi(x)\right)^2. 
 \end{equation}
We have the following asymptotic property in small time.
   \begin{proposition}\label{P:1.2}   
 For any $s\ge s_d$,  $\psi_0  \in H^s$,       $u \in \R^q$, and $\varphi \in C^r(\T^d;\R)$, where~$r={\ceil s+2}$,  there is a constant~$\de_0>0$ such that\,\footnote{For any vector $u\in \R^q$, with a slight abuse  of notation, we denote by the same letter the constant function equal to $u$.} $ \de^{-1} u \in  \Theta(e^{i\delta^{-1/2}\varphi}\psi_0, \de)$ for any $\de\in (0,\de_0)$ and     the following  limit  holds  
\begin{equation}\label{1.3}
 	e^{-i\delta^{-1/2}\varphi} \RR_{\de}(e^{i\delta^{-1/2}\varphi}\psi_0,\de^{-1} u)\to  e^{-i \left(\B(\varphi)+\lag u,Q \rag\right)}\psi_0  \quad\text{in $H^s$ as $\de\to 0^+$},
\end{equation}where $\RR_{\de}$ is the restriction of the solution at time $t=\delta$.
\end{proposition}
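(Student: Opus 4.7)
The plan is to reduce~\eqref{1.3} to showing that a suitably conjugated profile converges to $\psi_0$ in~$H^s$. I would introduce the time-dependent gauge
\[
\Phi(t,x) = \delta^{-1/2}\varphi(x) - (t/\delta)\bigl(\B(\varphi)(x) + \lag u, Q(x)\rag\bigr)
\]
and set $\eta(t,x) := e^{-i\Phi(t,x)}\psi(t,x)$, so that $\eta(0,\cdot) = \psi_0$ and, at $t=\delta$,
\[
e^{-i\delta^{-1/2}\varphi}\psi(\delta) = e^{-i(\B(\varphi)+\lag u,Q\rag)}\,\eta(\delta).
\]
Since multiplication by the smooth function $e^{-i(\B(\varphi)+\lag u,Q\rag)}$ is bounded on~$H^s$, the limit~\eqref{1.3} reduces to proving $\eta(\delta)\to \psi_0$ in~$H^s$. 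Rewriting~\eqref{0.1} in terms of~$\eta$, the identities $|\nabla\Phi|^2 = \delta^{-1}\B(\varphi) + O(\delta^{-1/2})$ and $\p_t \Phi = -\delta^{-1}(\B(\varphi)+\lag u,Q\rag)$ ensure that the $O(\delta^{-1})$ contributions cancel exactly against the singular control term $\delta^{-1}\lag u,Q\rag\eta$. After rescaling $\tau = t/\delta \in [0,1]$, the equation takes the form
\[
i\p_\tau \eta = -\delta\Delta\eta + \delta^{1/2}\mathcal{L}_\delta(\tau)\eta + \delta\bigl(V\eta + \kappa|\eta|^{2p}\eta\bigr),\qquad \eta(0)=\psi_0,
\]
where $\mathcal L_\delta(\tau)$ is a first-order linear differential operator whose coefficients (depending on $\varphi$, $Q$, $u$) are uniformly bounded in the $C^{r-2}$ norm; this is precisely the role of the hypothesis $r=\ceil{s}+2$.

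Next I would establish a uniform $H^s$ bound on $\eta$ over $\tau\in[0,1]$. Applying $(1-\Delta)^{s/2}$ and pairing with~$\eta$ in $L^2$, the self-adjointness of~$\Delta$ kills the $-\delta\Delta\eta$ contribution, while the critical first-order piece~$\delta^{1/2}\nabla\varphi\cdot\nabla\eta$ is treated using the identity $2\Re\lag \nabla\varphi\cdot\nabla f,f\rag_{L^2} = -\int \Delta\varphi\,|f|^2\,\dd x$ combined with a Kato--Ponce style commutator estimate for $[(1-\Delta)^{s/2},\nabla\varphi\cdot\nabla]$. The nonlinearity is absorbed by the Sobolev algebra property $s>d/2$. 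The outcome is
\[
\Bigl|\tfrac{\dd}{\dd\tau}\|\eta(\tau)\|_s^2\Bigr|\le C\delta^{1/2}\bigl(1+\|\eta(\tau)\|_s^{2p}\bigr)\|\eta(\tau)\|_s^2,
\]
and Gronwall gives $\sup_{\tau\in[0,1]}\|\eta(\tau)\|_s \le 2\|\psi_0\|_s$ for $\delta\in(0,\delta_0)$ small enough, together with the norm convergence $\bigl|\|\eta(1)\|_s^2 - \|\psi_0\|_s^2\bigr|\le C\delta^{1/2}$. Pulling back to $\psi = e^{i\Phi}\eta$, this a~priori bound combined with Proposition~\ref{P:1.1} and a standard bootstrap justifies $\delta^{-1}u \in \Theta(e^{i\delta^{-1/2}\varphi}\psi_0,\delta)$ for every $\delta$ small enough.

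It then remains to upgrade convergence from a weaker norm to~$H^s$. Integrating the $\eta$-equation in time and now estimating each term on the right by the uniform $H^s$ bound (the $-\delta\Delta\eta$ term is benign in the weaker space since it loses two derivatives cleanly), one obtains $\|\eta(\tau)-\psi_0\|_{H^{s-2}} \le C\delta^{1/2}$. Convergence in~$H^{s-2}$ together with boundedness in~$H^s$ yields weak convergence $\eta(1)\rightharpoonup \psi_0$ in~$H^s$; together with the norm convergence $\|\eta(1)\|_s \to \|\psi_0\|_s$, weak-plus-norm convergence in the Hilbert space~$H^s$ forces strong convergence $\eta(1)\to\psi_0$ in~$H^s$, and the desired limit~\eqref{1.3} follows by multiplication by $e^{-i(\B(\varphi)+\lag u,Q\rag)}$.

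The main obstacle I anticipate is the loss of one derivative in the term~$\delta^{1/2}\nabla\varphi\cdot\nabla\eta$: a direct Duhamel comparison in~$H^s$ would require an a~priori $H^{s+1}$ bound on~$\eta$ that is unavailable when $\psi_0\in H^s$ only. This is circumvented by the two-tier scheme above: a commutator-based energy estimate propagates the~$H^s$ norm and yields convergence of norms, while the weak-plus-norm upgrade in~$H^s$ promotes soft $H^{s-2}$ convergence to strong $H^s$ convergence.
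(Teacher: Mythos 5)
Your proposal is correct in substance, but it takes a genuinely different route from the paper. The paper conjugates only by the static phase $e^{-i\delta^{-1/2}\varphi}$ and compares the resulting trajectory with the explicit profile $w(t)=e^{-i(\B(\varphi)+\lag u,Q\rag)t}\psi_0^\delta$, where $\psi_0^\delta=e^{\delta^{1/4}\Delta}\psi_0$ is a mollified datum; the loss of derivatives (the terms $\delta\Delta w$ and $\delta^{1/2}\nabla w\cdot\nabla\varphi$) is paid by the mollification, via $\|\psi_0^\delta\|_{r}\le C\delta^{-1/4}$, and a single Gronwall estimate then gives strong $H^s$ convergence of the difference directly (non-integer $s$ being handled by interpolation). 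You instead absorb the full time-dependent phase into a gauge, verify the exact cancellation of the $\delta^{-1}$ terms, and replace the mollification by a two-tier compactness-free argument: almost-conservation of the $H^s$ norm of the gauged solution (commutator energy estimate, change of the norm of order $\delta^{1/2}$), soft convergence at rate $\delta^{1/2}$ in $H^{s-2}$ by integrating the equation, and the ``weak plus norm convergence implies strong convergence'' upgrade in the Hilbert space $H^s$. Both devices address the same one-derivative obstruction you correctly identify; the paper's version is more elementary (pure Gronwall, no weak limits, and it also yields the existence time $\TT^\delta>\delta$ through the same estimate), while yours avoids regularizing the data and isolates more transparently why the limit phase is exactly $\B(\varphi)+\lag u,Q\rag$. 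Your bootstrap for $\delta^{-1}u\in\Theta(e^{i\delta^{-1/2}\varphi}\psi_0,\delta)$ matches the paper's stopping-time argument.

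One bookkeeping caveat: the coefficients of your gauged equation are not bounded in $C^{r-2}$ as claimed, because $\Delta\Phi$ contains $\tau\,\Delta\bigl(\B(\varphi)\bigr)$, which involves three derivatives of $\varphi$, so they are only $C^{r-3}$; estimating the term $\delta\tau\,\Delta\bigl(\B(\varphi)\bigr)\eta$ in the $H^s$ energy identity then asks for roughly $\ceil{s}+3$ derivatives of $\varphi$, one more than the hypothesis $r=\ceil{s}+2$ provides. This slack is of the same size as the one implicit in the paper's bound $\|w(t)\|_{r}\le C\delta^{-1/4}$ (which also uses $r$ derivatives of the phase, hence $r+1$ of $\varphi$), it is harmless in the applications where $\varphi$ is a trigonometric polynomial, and it can be removed within your scheme by additionally mollifying $\varphi$ (or the datum) at a $\delta$-dependent scale; but as written the sentence tying the coefficient bound ``precisely'' to $r=\ceil{s}+2$ miscounts by one derivative.
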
 The proof of this proposition   is  postponed to    Section~\ref{S:4}.  Limit~\eqref{1.3} is a multiplicative version of a limit established in Proposition~2 in~\cite{VN-2019A} in the case of parabolic PDEs with additive controls.

  \section{Approximate controllability}\label{S:2}

 In what follows, we   assume that  $s\ge s_d$ and denote~$r={\ceil s+2}$ as in Proposition~\ref{P:1.2}.~We~start this section with  a 
definition of a saturation property     inspired by the papers  \cite{AS-2006, shirikyan-cmp2006}. Let~$\HH$ be a 
finite-dimensional subspace of~$ C^r(\T^d;\R)$, and let~$\FF(\HH)  $ be the largest   subspace of $ C^r(\T^d;\R)$  
whose  elements    can be represented in the~form    
$$
 \theta_0-\sum_{j=1}^n \B(\theta_j)
$$for some  integer   $n\ge1$ and  functions  $ \theta_j\in \HH$, $j=0, \ldots, n$, where $\B$ is given 
by~\eqref{1.2}.~As~$\B$ is   quadratic,~$\FF(\HH)$ is well-defined and finite-dimensional.
 Let~us~define a~non-decreasing sequence~$\{\HH_j \}$   of     finite-dimensional subspaces     by~$\HH_0= \HH$  and    $\HH_j=\FF(\HH_{j-1})$,      $j \geq1$, and denote 
\begin{equation}\label{2.1}
  \HH_\ty=  \bigcup_{j=1}^{+\infty} \HH_j.
\end{equation}
  \begin{definition}\label{D:2.1} 
  A finite-dimensional subspace $\HH\subset C^r(\T^d;\R)$  is said to be saturating if  $\HH_\ty $   is dense in $ C^r(\T^d;\R)$. 
  \end{definition}  
We   assume that the following condition is satisfied.
\begin{description}\item[\hypertarget{H1}]{\bf (H$_1$)}  The field    $Q=(Q_1, \ldots, Q_q) $ is saturating, i.e., the subspace $$\HH=\lspan\{Q_j:j=1,\ldots, q\}$$ is saturating in the sense of Definition~\ref{D:2.1}\end{description} 
In this section, we prove the    following result. As we will see below, it implies Theorems~A and~B 
formulated in the Introduction. 
  \begin{theorem}\label{T:2.2}Assume that Condition~{\rm(\hyperlink{H1}{H$_1$})} is satisfied.~Then   for any $\e>0$, $\varkappa>0$,  $\psi_0 \in   H^s$,  and     $\theta \in   C^r(\T^d;\R)$, there is a time $T\in (0, \varkappa)$ and   a control $u\in \Theta(\psi_0,T)$     such~that
	$$ 
 \|\RR_T(\psi_0,u)- e^{i\theta}\psi_0\|_{s}<\e.
 $$ 
\end{theorem}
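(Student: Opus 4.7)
The plan is to establish by induction on $j\ge 0$ the following auxiliary statement: for every $\psi_0 \in H^s$ and every $\theta \in \HH_j$, the target $e^{i\theta}\psi_0$ is reachable from $\psi_0$ in arbitrarily small time, i.e., for all $\e>0$ and $\varkappa>0$ there exist $T\in(0,\varkappa)$ and $u\in\Theta(\psi_0,T)$ with $\|\RR_T(\psi_0,u)-e^{i\theta}\psi_0\|_s<\e$. Once this is proved, the theorem follows by density: $\HH_\ty=\bigcup_{j\ge 1}\HH_j$ is dense in $C^r(\T^d;\R)$ by hypothesis~(\hyperlink{H1}{\rm H$_1$}), and the map $\theta \mapsto e^{i\theta}\psi_0$ is continuous from $C^r(\T^d;\R)$ into $H^s$, since $H^s$ is a Banach algebra for $s>d/2$ and standard Moser-type estimates bound $\|e^{i\theta_1}-e^{i\theta_2}\|_s$ in terms of $\|\theta_1-\theta_2\|_{C^r}$ on bounded sets.

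The base case $j=0$ follows directly from Proposition~\ref{P:1.2} applied with $\varphi\equiv 0$. Any $\theta \in \HH_0=\lspan\{Q_j:j=1,\ldots,q\}$ has the form $\theta=-\lag u, Q\rag$ for some constant vector $u\in\R^q$; the limit~\eqref{1.3} then gives $\RR_\de(\psi_0,\de^{-1}u)\to e^{-i\lag u,Q\rag}\psi_0 = e^{i\theta}\psi_0$ in $H^s$ as $\de\to 0^+$, so reachability holds with $T=\de$ taken arbitrarily small and the control $\de^{-1}u$ lying in $\Theta(\psi_0,\de)$ for sufficiently small $\de$.

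For the inductive step, assume the reachability property holds at level $j$ for every initial state in $H^s$, and take $\theta\in\HH_{j+1}$, written as $\theta=\theta_0-\sum_{m=1}^n \B(\theta_m)$ with $\theta_0,\theta_m \in \HH_j$. The construction proceeds as a concatenation of $2n+1$ control phases. The guiding idea is a triple move that, for each $m$, adds the factor $e^{-i\B(\theta_m)}$ to the accumulated phase: starting from a state $\psi^{(m-1)}\approx e^{-i\sum_{k<m}\B(\theta_k)}\psi_0$, we (i) invoke the induction hypothesis to approximately multiply $\psi^{(m-1)}$ by $e^{i\de^{-1/2}\theta_m}$ in small time (permitted because $\de^{-1/2}\theta_m \in \HH_j$, as $\HH_j$ is a vector space); (ii) let the NLS flow run freely with $u=0$ on an interval of length $\de$, which by Proposition~\ref{P:1.2} applied to initial datum $\psi^{(m-1)}$ with $\varphi=\theta_m$ and $u=0$ yields a state close to $e^{i\de^{-1/2}\theta_m - i\B(\theta_m)}\psi^{(m-1)}$; (iii) invoke the induction hypothesis once more, now with phase $-\de^{-1/2}\theta_m \in \HH_j$, to cancel the rapid oscillation and land near $\psi^{(m)}\approx e^{-i\B(\theta_m)}\psi^{(m-1)}$. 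After executing these triples for $m=1,\dots,n$, a final use of the induction hypothesis with phase $\theta_0\in\HH_j$ produces a state close to $e^{i\theta}\psi_0$ in total time less than $\varkappa$.

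The main obstacle is the quantitative bookkeeping required to propagate the target tolerance $\e$ through the $2n+1$ control phases while keeping their total duration below $\varkappa$. Three delicate points must be handled. First, the intermediate targets $e^{i\de^{-1/2}\theta_m}\psi^{(m-1)}$ have $H^s$-norm growing as $\de\to 0^+$, so although Proposition~\ref{P:1.2} furnishes a threshold $\de_0$ for each fixed $\psi^{(m-1)}$, $\theta_m$, one must verify that the same $\de$ is admissible at every stage. Second, the stability estimate of Proposition~\ref{P:1.1}(ii) governs how much tighter the tolerance at each inductive invocation must be so that, after composition with subsequent NLS flows, the cumulative error stays below $\e$. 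Third, the time budget allotted to each of the $2n+1$ phases must sum to less than $\varkappa$. All three are resolved by choosing parameters backward from the final output: first fix $\de>0$ small enough that Proposition~\ref{P:1.2} delivers the required accuracy in the middle step of each triple and that Proposition~\ref{P:1.1} applies uniformly over the finite family of intermediate data; then tighten the induction-hypothesis tolerances and shorten the induction-hypothesis times (both of which can be made arbitrarily small by hypothesis) accordingly.
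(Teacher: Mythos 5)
Your overall strategy is exactly the paper's: induction along the saturating sequence $\{\HH_j\}$, base case from Proposition~\ref{P:1.2} with $\varphi=0$, an inductive "triple move" (multiply by $e^{i\de^{-1/2}\theta_m}$ via the induction hypothesis, run the free flow for time $\de$ and invoke \eqref{1.3} with $u=0$, then unwind the phase $-\de^{-1/2}\theta_m$, using that $\HH_j$ is a vector space), stability from Proposition~\ref{P:1.1} to glue the pieces, and the density of $\HH_\ty$ in $C^r(\T^d;\R)$ to conclude. So the skeleton is right and matches the paper.

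The gap is in your resolution of the bookkeeping, i.e.\ in the order of quantifiers "first fix one $\de$ for all $2n+1$ phases, then tighten the induction tolerances." The error produced by Proposition~\ref{P:1.2} at the middle step of triple $m$ is a quantity $\rho_m(\de)$ that is frozen once $\de$ is chosen and cannot be reduced afterwards; yet in every subsequent triple it is amplified by the multiplier norm of $e^{\pm i\de^{-1/2}\theta_k}$ on $H^s$ (of order $\de^{-s/2}$) and by the stability constant of Proposition~\ref{P:1.1} for the free flow started at data of $H^s$-size $\sim\de^{-s/2}$ over time $\de$ — both of which blow up as $\de\to0^+$. Hence "the required accuracy in the middle step of each triple" itself depends on $\de$, and the qualitative statements of Propositions~\ref{P:1.1} and~\ref{P:1.2} give no comparison between $\rho_m(\de)$ and these $\de$-dependent amplification factors (nor between $M(\de)\rho_m(\de)$ and the admissibility threshold $\delta(T,\Lambda)$ needed to apply Proposition~\ref{P:1.1} in the next triple). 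As written, the backward choice does not close. The repair is to organize the induction as the paper does: prove, as a self-contained claim, that from an arbitrary initial state one can reach $e^{-i\B(\theta_m)}\psi$ with arbitrary accuracy in arbitrarily small time, choosing inside this single claim first the tolerance, then $\de$ (via Proposition~\ref{P:1.2} applied at the fixed datum), and only then the two induction tolerances; then chain these $n$ claims and the final phase $\theta_0$. Across the chained claims the only amplification comes from the fixed multipliers $e^{-i\B(\theta_m)}$, which are $\de$-independent, so the backward choice of tolerances is legitimate there.
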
   	 
 \begin{proof}  	 
 By using an induction argument in $N$, we    show that the approximate~controllability property in this theorem 
is true   for any~$\theta \in \HH_N $ and $N\ge0$. Combined with the     saturation hypothesis, this  will   lead 
to approximate   controllability   with any~$\theta \in   C^r(\T^d;\R)$.

 {\it Step~1.~Case $N=0$.}~Let us   show that,
 for any  $\e>0$, $\varkappa>0$,  $\psi_0 \in   H^s$,  and~$\theta \in   \HH$, there is  a time $T\in (0, \varkappa)$ and  a control  $u\in \Theta(\psi_0,T)$     such~that
\begin{equation}\label{2.2}
 \|\RR_T(\psi_0,u)- e^{i\theta}\psi_0\|_{s}<\e.
\end{equation}
By applying Proposition~\ref{P:1.2} with $\varphi=0$ and $u\in \R^q$ such that 
$\theta=-\lag u, Q\rag$, we~obtain
$$
 \RR_{\de}(\psi_0,\de^{-1} u)\to  e^{i   \theta }\psi_0  \quad\text{in $H^s$ as $\de\to 0^+$}.
$$
This implies \eqref{2.2}    with sufficiently small time $T=\delta$ and control~$\delta^{-1}u$.

{\it Step~2.~Case   $N\ge1$.}~We    assume  that the      result  is true   for any $\theta\in \HH_{N-1}$. Let $\tilde \theta\in \HH_N$  be    of the~form
$$
\tilde \theta=\theta_0 -\sum_{j=1}^n \B(\theta_j),	
$$
  where      $n\ge1$ and~$\theta_j \in \HH_{N-1}$, $j=0, \ldots, n$.~By applying Proposition~\ref{P:1.2} 
with $\varphi=\theta_1$ and $u=0$, we get
$$
 	e^{-i\delta^{-1/2}\theta_1} \RR_{\de}(e^{i\delta^{-1/2}\theta_1}\psi_0,0)\to  e^{-i  \B(\theta_1) }\psi_0  \quad\text{in $H^s$ as $\de\to 0^+$}.
$$
The induction hypothesis, the assumption that  $\theta_1\in \HH_{N-1}$, and Proposition~\ref{P:1.1} imply that,  for any  $\e>0$ and $\varkappa>0$,   there is a time  $T_1\in (0, \varkappa)$ and a control~$u_1\in 
\Theta(\psi_0,T_1)$     such~that
	$$ 
 \|\RR_{T_1}(\psi_0,u_1)- e^{-i\B(\theta_1)}\psi_0\|_{s}<\e.
 $$By iterating this argument with   $\theta_j\in \HH_{N-1}$, $j=0, \ldots, n$, we obtain that 
for any~$\e>0$ and~$\varkappa>0$,   there is~$T_n\in (0, \varkappa)$ and~$u_n\in \Theta(\psi_0,T_n)$     such~that
	$$ 
 \|\RR_{T_n}(\psi_0,u_n)- e^{i\left(\theta_0-\sum_{j=1}^n \B(\theta_j)\right)}\psi_0\|_{s}=\|\RR_{T_n}(\psi_0,u_n)- e^{i\tilde \theta }\psi_0\|_{s}<\e.
 $$As $\tilde \theta\in \HH_N$ is arbitrary, this proves the required property for $N$.

  {\it Step~3. Conclusion.} Finally, let   $\theta \in   C^r(\T^d;\R)$ be     arbitrary.  By~the saturation hypothesis,      $  \HH_{\ty}$  is dense in~$C^r(\T^d;\R)$. Hence, we can find  $N\ge1$ and $\tilde \theta \in H_N$ such that 
  $$
  \| e^{i\theta}\psi_0-e^{i\tilde \theta}\psi_0\|_s<\e.
  $$    Applying  the controllability property proved in the previous      steps  for~$\tilde \theta \in H_N$, we~complete the proof. 
  \end{proof}
  As a consequence of this result, we have the following two theorems.
\begin{theorem}\label{T:2.3}
Under the conditions of Theorem~\ref{T:2.2},  for any $M>0$, $\varkappa>0$, and non-zero $\psi_0 \in   H^s$,    there is a time $T\in (0, \varkappa)$ and   a control $u\in \Theta(\psi_0,T)$     such~that
	$$ 
 \|\RR_T(\psi_0,u)\|_s> M.
 $$ 
\end{theorem}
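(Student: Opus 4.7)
The plan is to combine Theorem~\ref{T:2.2} with the observation that, although the $L^2$ norm of $e^{i\theta}\psi_0$ equals that of $\psi_0$, the $H^s$ norm of $e^{i\theta}\psi_0$ is unbounded as $\theta$ ranges over $C^r(\T^d;\R)$ (whenever $\psi_0\ne 0$). Once a single real smooth $\theta$ with $\|e^{i\theta}\psi_0\|_s>M+1$ is exhibited, applying Theorem~\ref{T:2.2} with $\e=1$ to the target $e^{i\theta}\psi_0$ on time scale $\varkappa$ produces $T\in(0,\varkappa)$ and $u\in\Theta(\psi_0,T)$ with
$$
\|\RR_T(\psi_0,u)-e^{i\theta}\psi_0\|_s<1,
$$
and the triangle inequality yields the desired bound $\|\RR_T(\psi_0,u)\|_s>M$. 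So the whole task reduces to the construction of such a phase $\theta$.

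To build $\theta$, I would fix $\phi(x)=\cos x_1$ and set $\theta=N\phi$ for a large integer $N$. Since $s>d/2$, the given non-zero $\psi_0\in H^s$ is continuous and therefore non-zero on a set of positive measure, while the zero set of $\p_{x_1}\phi=-\sin x_1$ has Lebesgue measure zero. Consequently
$$
c_0:=\int_{\T^d}|\p_{x_1}\phi(x)|^2|\psi_0(x)|^2\dd x>0.
$$
Differentiating $\p_{x_1}(e^{iN\phi}\psi_0)=iN(\p_{x_1}\phi)e^{iN\phi}\psi_0+e^{iN\phi}\p_{x_1}\psi_0$ and expanding the squared $L^2$ norm, the term of highest order in $N$ is $N^2 c_0$, while the remaining contributions are $O(N)$. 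Hence $\|e^{iN\phi}\psi_0\|_{H^1}\to+\infty$ as $N\to+\infty$, and since $s\ge s_d\ge 1$ and $\|\cdot\|_s\ge\|\cdot\|_1$, we also have $\|e^{iN\phi}\psi_0\|_s\to+\infty$. Choosing $N$ large enough gives the required~$\theta=N\phi\in C^\infty(\T^d;\R)\subset C^r(\T^d;\R)$.

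It only remains to feed this $\theta$ into Theorem~\ref{T:2.2} with the prescribed $\varkappa$ and with tolerance $\e=1$ to conclude. There is no real obstacle: the construction of $\theta$ is elementary once one notices that gauge-type phase multiplications preserve $L^2$ but inflate higher Sobolev norms, and the passage from $e^{i\theta}\psi_0$ to a genuine trajectory of the NLS equation is handled directly by the approximate controllability statement proved in the previous theorem.
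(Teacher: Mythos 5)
Your proposal is correct and follows essentially the same route as the paper: reduce the problem to exhibiting a phase $\theta\in C^r(\T^d;\R)$ with $\|e^{i\theta}\psi_0\|_s>M$ and then invoke Theorem~\ref{T:2.2}, where the paper takes $\theta=\la\theta_1$ with $\la$ large and uses $\|\cdot\|_1\le\|\cdot\|_s$, exactly as you do with $\theta=N\cos x_1$. Your explicit expansion showing the $H^1$ norm grows like $N^2 c_0$ with $c_0=\int_{\T^d}|\sin x_1|^2|\psi_0|^2\,\dd x>0$ is just a more detailed rendering of the paper's brief argument.
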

\begin{proof}
It suffices to  	apply Theorem~\ref{T:2.2} by choosing $\theta \in   C^r(\T^d;\R)$ such that 
$$
\|e^{i\theta}\psi_0\|_s> M.
$$To find such $\theta$, we   take any $\theta_1 \in   C^r(\T^d;\R)$ verifying 
$
\|e^{i\theta_1}\psi_0\|_1\neq 0
$, put~$\theta= \la \theta_1$ with sufficiently large $\la>0$, and use the inequality $\|\cdot\|_1\le \|\cdot\|_s.$
  \end{proof}

 \begin{theorem}\label{T:2.4} Assume that the conditions of Theorem~\ref{T:2.2} are satisfied  and
 \begin{equation}\label{2.3}
    \mek \in \lspan\{Q_j:j=1,\ldots, q\} \quad\quad\text{and}\quad\quad V= 0.
    \end{equation} 
 Then, for any $\e>0$,   $l, m\in \Z^d$,~$ \theta\in   C^r(\T^d;\R)$,     and   $T>0$, there is a   control $u\in \Theta(\phi_l,T)$  such that  
	$$ 
 \|\RR_T(\phi_l,u) - e^{i\theta}\phi_m\|_{L^2}<\e.
 $$  
 \end{theorem}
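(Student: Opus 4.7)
The plan is to reduce Theorem~\ref{T:2.4} to Theorem~\ref{T:2.2} by combining two observations that exploit the extra hypotheses $V=0$ and $\mek\in \lspan\{Q_j:j=1,\ldots,q\}$. First, the eigenstate $\phi_l$ is a stationary solution of~\eqref{0.1} for an explicit constant control. Writing $\mek=\sum_{j=1}^q c_j Q_j$ and setting $u_0:=-\bigl(|l|^2+\kappa(2\pi)^{-dp}\bigr)(c_1,\ldots,c_q)\in\R^q$ yields $\lag u_0,Q(x)\rag\equiv -(|l|^2+\kappa(2\pi)^{-dp})$. Since $-\Delta\phi_l=|l|^2\phi_l$ and $|\phi_l|^{2p}\equiv(2\pi)^{-dp}$, the function $\psi(t)\equiv\phi_l$ solves~\eqref{0.1} with $u\equiv u_0$, so we may stay at $\phi_l$ for any length of time.

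Second, I shall use an $L^2$-density statement: for every $\de>0$ and every $\theta\in C^r(\T^d;\R)$, there is $\tilde\theta\in C^r(\T^d;\R)$ with
$$
\|e^{i\tilde\theta}\phi_l-e^{i\theta}\phi_m\|_{L^2}<\de.
$$
Setting $k:=m-l\in \Z^d$, this reduces to approximating the continuous $S^1$-valued function $e^{i\lag x,k\rag}$ in $L^2(\T^d)$ by $e^{i\tau(x)}$ with $\tau$ smooth and periodic (one then takes $\tilde\theta=\theta+\tau$). For each coordinate $j$ I approximate the one-dimensional factor $e^{ik_j x_j}$ in $L^2(\T)$ by $e^{i\tau_j(x_j)}$, where $\tau_j\in C^\ty(\T;\R)$ agrees with $k_j x_j$ on $[\eta,2\pi-\eta]$ and is smoothly cut down to zero on the complementary thin arcs; the two agree off a set of measure $O(\eta)$, so the $L^2$-error is $O(\sqrt{\eta})$. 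A telescoping bound across the $d$ coordinates then gives $\bigl\|\prod_{j=1}^d e^{i\tau_j(x_j)}-e^{i\lag x,k\rag}\bigr\|_{L^2(\T^d)}\to 0$ as $\eta\to 0^+$, and we set $\tau(x):=\sum_{j=1}^d\tau_j(x_j)$.

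With these ingredients in hand, fix $\e>0$ and $T>0$. I first apply the density claim to choose $\tilde\theta\in C^r(\T^d;\R)$ with $\|e^{i\tilde\theta}\phi_l-e^{i\theta}\phi_m\|_{L^2}<\e/2$. Then Theorem~\ref{T:2.2} applied with initial datum $\phi_l$, target phase $\tilde\theta$, $\varkappa=T$, and tolerance $\e/2$ in $H^s$ produces $T_1\in(0,T)$ and $u_1\in\Theta(\phi_l,T_1)$ with $\|\RR_{T_1}(\phi_l,u_1)-e^{i\tilde\theta}\phi_l\|_s<\e/2$, and the same bound in $L^2$ follows from $\|\cdot\|_{L^2}\le\|\cdot\|_s$. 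I splice
$$
u(t)=\begin{cases} u_0 & \text{if } t\in[0,T-T_1],\\ u_1\bigl(t-(T-T_1)\bigr) & \text{if } t\in[T-T_1,T].\end{cases}
$$
By the stationarity observation, the solution is constantly $\phi_l$ on $[0,T-T_1]$; gluing this with the segment driven by $u_1$ on $[T-T_1,T]$ and invoking uniqueness (Proposition~\ref{P:1.1}) shows $u\in\Theta(\phi_l,T)$ and $\RR_T(\phi_l,u)=\RR_{T_1}(\phi_l,u_1)$, and the triangle inequality delivers $\|\RR_T(\phi_l,u)-e^{i\theta}\phi_m\|_{L^2}<\e$.

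The main point requiring care is the density step: since $e^{i\lag x,k\rag}$ has nonzero winding numbers whenever $k\neq 0$, it is not of the form $e^{i\tau}$ for any continuous periodic $\tau$, so the approximation necessarily creates a thin seam where the phase is cut down to zero. This is precisely why the conclusion of Theorem~\ref{T:2.4} is stated in $L^2$ rather than in $H^s$.
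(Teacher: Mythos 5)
Your proposal is correct and follows essentially the same route as the paper: approximate the target $e^{i\theta}\phi_m$ in $L^2$ by $e^{i\tilde\theta}\phi_l$, use Theorem~\ref{T:2.2} to reach the latter in some short time $T_1<T$, and exploit the fact that assumption~\eqref{2.3} makes $\phi_l$ a stationary solution for the explicit constant control $u_0$, so the two control segments can be spliced to fill out the prescribed time $T$. The only addition is that you spell out the cut-off construction behind the $L^2$-density of $\{e^{i\tilde\theta}\phi_l\}$ near $e^{i\theta}\phi_m$ (and the winding-number reason the statement is only in $L^2$), which the paper asserts without proof.
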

 \begin{proof}Let us take any $  \theta_1\in   C^r(\T^d;\R)$.~Applying Theorem~\ref{T:2.2}, we find a time~$T_1\in (0,T)$ and a  control $u_1\in \Theta(\phi_l,T_1) $ such that
$$
 	 \|\RR_{T_1}(\phi_l,u) - e^{i\theta_1}\phi_l\|_{s}<\frac\e2.	
$$ Choosing  $  \theta_1\in   C^r(\T^d;\R)$ such that 
$$
 	 \|e^{i\theta_1}\phi_l - e^{i\theta}\phi_m\|_{L^2}<\frac\e2,	
$$    we arrive at
$$
 	 \|\RR_{T_1}(\phi_l,u) -  e^{i\theta}\phi_m\|_{L^2}<\e.	
$$
Now, notice that $\phi_l$ is a stationary solution of Eq.~\eqref{0.1} corresponding a   control $u_0\in L^2_{loc}(\R_+;\R^q)$ satisfying the relation
$$
\lag u_0(t), Q(x)\rag= -|l|^2-\kappa (2\pi)^{-dp} \quad \text{for any $t\ge0$ and $x\in \T^d$}.
$$Such a choice of $u_0$ is possible in view of assumption~\eqref{2.3}.  Thus, $u_0\in \Theta(\phi_l, t)$ and 
$\phi_l=\RR_{t}(\phi_l,u_0)$ for any $t\ge0$. Setting $$
u(t)=\begin{cases} u_0(t) & \text{for }t\in[0,T-T_1],   \\ u_1(t-T+T_1) & \text{for }t\in(T-T_1, T],
\end{cases}	
$$ we   complete the proof of the theorem.
 \end{proof}

Let us close this section with an example of a saturating subspace.
 Let~$\II\subset \Z^d_*$  be a    finite     set and let 
 \begin{equation}\label{2.4}
  \HH= \HH(\II)=\text{span}\left\{{\bf 1},\, \sin\lag x,k\rag,\,\cos\lag x,k\rag: k\in \II\right\}.
  \end{equation}Recall that $\II$ is a   	 generator   if~any vector of~$\Z^d$ is a   linear combination of   vectors of~$\II$ with integer coefficients.   The following proposition is proved in Section~\ref{S:4}.
  \begin{proposition}\label{P:2.5}
  	The subspace $  \HH(\II)$ is saturating in the sense of Definition~\ref{D:2.1} if and only if $\II$ is a generator and for any   $l,m\in \II$, there are vectors $\{n_j\}_{j=1}^k \subset  \II$ such that $l\not\perp n_1$, $n_{j}\not\perp n_{j+1}$, $j=1, \ldots, k-1,$ and $n_k\not\perp m$.
  	 \end{proposition}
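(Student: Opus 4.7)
The plan is to translate the abstract saturation property into a combinatorial condition on the set of Fourier modes realized inside $\HH_\infty$, and then prove both implications by analyzing this set.

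The computational backbone is the identity
$$
\tfrac12\bigl(\B(\theta_1+\theta_2)-\B(\theta_1)-\B(\theta_2)\bigr)=\sum_{j=1}^d \partial_{x_j}\theta_1\,\partial_{x_j}\theta_2,
$$
which lies in $\HH_N$ as soon as $\theta_1,\theta_2\in\HH_{N-1}$. Evaluating the right-hand side on the trigonometric basis $\sin\lag x,k\rag,\cos\lag x,k\rag,\sin\lag x,l\rag,\cos\lag x,l\rag$ and using product-to-sum formulas produces expressions of the form $\lag k,l\rag$ times combinations of $\sin\lag x,k\pm l\rag$ and $\cos\lag x,k\pm l\rag$; varying the four sin/cos pairings and taking linear combinations, one isolates each of $\sin\lag x,k\pm l\rag$ and $\cos\lag x,k\pm l\rag$ whenever $\lag k,l\rag\neq 0$. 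Applied with $k=l$, the same procedure (together with $\B(\sin\lag x,k\rag)$ and $\sin\lag x,k\rag\cos\lag x,k\rag$) yields the doubled frequency $2k$. Let $F\subset\Z^d$ be the set of frequencies such that both $\sin\lag x,k\rag$ and $\cos\lag x,k\rag$ lie in $\HH_\infty$, with $0\in F$ accounting for $\mek$. Then $F$ is precisely the closure of $\II\cup\{0\}$ under $(k,l)\mapsto k\pm l$ whenever $\lag k,l\rag\neq 0$, and since trigonometric polynomials are dense in $C^r(\T^d;\R)$, saturation of $\HH(\II)$ is equivalent to $F=\Z^d$.

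For sufficiency, assume the generator and chain conditions. First, the self-doubling step $k\mapsto 2k\mapsto 3k\mapsto\cdots$ (valid because $\lag k,nk\rag=n|k|^2\neq 0$), together with $k-2k=-k$, yields $nk\in F$ for every $k\in\II$ and $n\in\Z$. Second, for $k,l\in\II$ I would show $k+l\in F$ by induction on the length of a chain $n_1,\ldots,n_p$ in $\II$ linking $k$ to $l$: when $k\perp l$, I would combine vectors of the form $k\pm n_1$ and $l\pm n_1$ (which lie in $F$ thanks to shorter chains), and use the self-multiplication step to absorb or insert integer multiples of chain elements whenever an intermediate sum turns out orthogonal to the next vector to be added. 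Iterating, every element of $\lag\II\rag_\Z$ lies in $F$, which by the generator hypothesis equals $\Z^d$, so $\HH_\infty$ is dense in $C^r$.

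For necessity, I would argue by contrapositive. If $\II$ is not a generator, the subgroup $\Lambda=\lag\II\rag_\Z\subsetneq\Z^d$ contains the Fourier support of $\HH$, and because the Fourier support of $\B(\theta)$ lies in the support of $\theta$ plus itself, an induction gives Fourier support of $\HH_\infty$ inside $\Lambda$, ruling out density. If the chain condition fails, partition $\II$ into the connected components $C_1\sqcup\cdots\sqcup C_r$ ($r\ge 2$) of the non-orthogonality graph; vectors in distinct classes are orthogonal, so the spans $W_i=\mathrm{span}_\R(C_i)$ are mutually orthogonal. Because every cross term $\sum_j\partial_{x_j}\theta_1\partial_{x_j}\theta_2$ vanishes when $\theta_1,\theta_2$ have frequencies in different $W_i$'s, the operator $\B$ never couples the components, and the Fourier support of $\HH_\infty$ stays inside $\bigcup_i(W_i\cap\Z^d)\cup\{0\}$, a proper subset of $\Z^d$ as soon as $r\ge 2$.

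The principal obstacle is the sufficiency step where one reconstructs every element of $\lag\II\rag_\Z$ from the non-orthogonal addition operation: intermediate partial sums may happen to be orthogonal to the next chain element, so the induction needs careful bookkeeping, leveraging the doubling operation to break such accidental orthogonalities. This combinatorial argument is the delicate heart of the proof; the rest amounts to routine product-to-sum calculations and standard approximation of $C^r$ functions by trigonometric polynomials.
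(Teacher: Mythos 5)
Your overall strategy coincides with the paper's (use $\B$ to produce $\cos\lag x,k\pm l\rag$, $\sin\lag x,k\pm l\rag$ from non-orthogonal pairs, get integer multiples by doubling, and prove necessity by confining the Fourier support), but the decisive step is left unproved. For sufficiency you must reach every integer combination $a_1m_1+\cdots+a_Nm_N$, and you yourself identify the obstruction -- a partial sum may be orthogonal to the next increment -- and then defer it to ``careful bookkeeping'' and an unspecified insertion of multiples of chain elements. That bookkeeping is precisely the nontrivial content of the sufficiency direction and cannot be left as a plan. The paper carries it out by induction on $N=\mathrm{card}(\II)$: after reordering so that $m_{N-1}\not\perp m_N$ while $\{m_1,\dots,m_{N-1}\}$ still satisfies the chain condition, it writes $a_1m_1+\cdots+a_Nm_N=\bigl(a_1m_1+\cdots+(a_{N-1}-k)m_{N-1}\bigr)+\bigl(km_{N-1}+a_Nm_N\bigr)$ and notes that the scalar product of the two brackets is $(a_{N-1}-k)k\|m_{N-1}\|^2+O(k)$, hence nonzero for large $k$; the first bracket is handled by the induction hypothesis, the second by the two-element case, and the non-orthogonal addition step finishes. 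Your sketch contains the germ of this idea but not the argument, so as written the proof is incomplete at its core.

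There is also a technical gap in your ``computational backbone'': you assert that $\sum_{j}\p_{x_j}\theta_1\,\p_{x_j}\theta_2=\tfrac12\bigl(\B(\theta_1+\theta_2)-\B(\theta_1)-\B(\theta_2)\bigr)$ lies in $\HH_N$ whenever $\theta_1,\theta_2\in\HH_{N-1}$. Under the paper's definition, $\FF(\HH_{N-1})$ is the largest subspace of functions admitting a representation $\theta_0-\sum_j\B(\theta_j)$ with $\theta_j\in\HH_{N-1}$; since this is a subspace sitting inside a convex cone, one must exhibit such a one-sided representation both for the function and for its negative, and a term $+\B(\theta_1+\theta_2)$ cannot simply be carried along. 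For pure harmonics this is rescued by $\B(\sin\lag x,m\rag)+\B(\cos\lag x,m\rag)=|m|^2$ together with $\mek\in\HH(\II)$, which is exactly why the paper records the two signed identities in \eqref{4.2} and the $\pm/\mp$ pairs of Step~1.2; your argument should make this explicit. Your necessity argument (Fourier support confined to the subgroup generated by $\II$, respectively to the union of the mutually orthogonal component spans) is correct and matches the paper's Step~4; the claimed equivalence ``saturating iff $F=\Z^d$'' is overstated in one direction, since density of $\HH_\ty$ does not a priori force individual harmonics to belong to it, but you never use that direction, so it is harmless.
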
Clearly, the set 
    	      $\KK \subset \Z^d_*$     defined   by \eqref{0.3} satisfies the condition in this 
proposition.~Therefore, the subspace $\HH(\KK)$ is saturating, and     Theorems~A and~B follow 
from~Theorems~\ref{T:2.2} and~\ref{T:2.4}, respectively.

 \section{Proof of Proposition~\ref{P:1.2}}\label{S:3}

  We start by proving the result in the case when
   $s>d/2$ is an   integer, so~$r=s+2$.
  Let~us fix any $ R>0$ and assume that  $\psi_0  \in H^s$, $\varphi \in C^{r}(\T^d;\R)$,    and~$u \in \R^q$  are such~that   
\begin{equation}\label{3.1}
\|\psi_0\|_s +\|\varphi\|_{C^{r}}+\|u\|_{\R^q}\le R.
\end{equation} For any $\delta>0$, we   denote   $\phi(t)=  	e^{-i\delta^{-1/2}\varphi} \RR_t(e^{i\delta^{-1/2}\varphi}\psi_0,\de^{-1} u)$.~According to Proposition~\ref{P:1.1}, $\phi(t)$   exists up      to some maximal time    $\TT^\delta=\TT(e^{i\delta^{-1/2}\varphi}\psi_0,\de^{-1} u)$, and
$$
 \| e^{i\delta^{-1/2}\varphi}\phi(t)\|_s\to +\infty \quad\text{as~$t\to  \TT^{\delta-}$,
if $\TT^\delta<\ty$.}
$$ We need to show that
   \begin{itemize}
   \item[$(a)$]   there is a constant   $\delta_0>0$ such that $\TT^\delta>\delta$ for any $\delta<\de_0$;  
      	   \item[$(b)$]     the following   limit   holds
$$
 	 \phi(\delta)\to  e^{-i \left(\B(\varphi)+\lag u,Q \rag\right)}\psi_0  \quad\text{in $H^s$ as $\de\to 0^+$}.
$$  \end{itemize}To prove these   properties,
  we  introduce the     functions
 \begin{align}
 	w(t)&=   e^{-i \left(\B(\varphi)+\lag u,Q \rag\right)t}\psi_0^\delta,\label{3.2}\\
 	v(t)&= \phi(\delta t)-w(t),\nonumber
 \end{align}where 
 $\psi_0^\delta \in H^{r}$ is such that\,\footnote{In what follows, $C$    denotes positive constants which  may  change
from line to line. These constants     depend on the parameters     $R, V,  Q, \kappa, p,d,s $, but not on $\delta$.  } 
 \begin{align}
     &\|\psi_0^\delta\|_{s}  \le C  \quad\quad\quad  \quad \text{for } \de\le 1,\label{3.3}\\
   & \|\psi_0^\delta\|_r \le C   \delta^{-1/4}  \,\,\,\,\quad \text{for } \de\le 1,\label{3.4}\\
  &  \|	\psi_0-\psi_0^\delta\|_s\to 0  \quad\,\,\, \text{ as $\delta\to 0^+$}.\nonumber
 \end{align}
For example, we can define $\psi_0^\delta$ by using the  heat semigroup:    $\psi_0^\delta= e^{\delta^{ 1/4} 
\Delta}\psi_0$. In~view of~\eqref{3.1}-\eqref{3.4},  we have
\begin{align} 
\|w(t)\|_{s}&\le C, \quad\,\,\quad\quad t\ge0,\label{3.5}\\
\|w(t)\|_{r}&\le C{\delta^{-1/4}}, \quad t\ge0.\label{3.6}	
\end{align}Furthermore,  
  $v(t)$ is 
   well-defined  for $t<\delta^{-1}\TT^\delta$ and satisfies the equation
    \begin{align}
	i \p_t v&=-  \delta\Delta (v+w)+\delta V(v+w)+ \delta \kappa  |v+w|^{2p}(v+w)\nonumber\\
	&\quad -i\delta^{\frac12}\DB( v+w, \varphi) +\B(\varphi)v+\lag u,Q\rag v, \label{3.7}
\end{align} 
     and  the   initial condition 
     \begin{equation}\label{3.8}
      v(0)=\psi_0-\psi_0^\delta,
      \end{equation}
      where 
$$
\DB(v+w, \varphi)= (v+w) \Delta \varphi +2\sum_{j=1}^d  \p_{x_j} (v+w) \,  \p_{x_j}\varphi.
$$Let $\alpha=(\alpha_1, \ldots, \alpha_d)\in \N^d$ be   such   that $|\alpha|=|\alpha_1|+\ldots+|\alpha_d|\le s$.
We take the scalar product of Eq.~\eqref{3.7} with $\p^{2\alpha }v$  in $L^2$ and integrating  by 
parts, we obtain
\begin{align}
\p_t\| \p^{\alpha}v\|_{L^2}^2&\le C\Big(\delta |\lag\Delta w, \p^{2\alpha }v\rag_{L^2}|+\delta |\lag V(v+ w), \p^{2\alpha }v\rag_{L^2}| \nonumber\\ &\quad + \delta  |\lag   |v+w|^{2p}(v+w), \p^{2\alpha }v\rag_{L^2}| +  \delta^{1/2}|\lag \DB( v+w, \varphi), \p^{2\alpha }v\rag_{L^2}| \nonumber\\ &\quad+|\lag \B(\varphi)v+\lag u,Q\rag v, \p^{2\alpha }v\rag_{L^2}|\Big) = \sum_{j=1}^5 I_j.\label{3.9}	
\end{align}We estimate the terms $I_1, I_2, I_3,$ and $I_5$ by integrating by parts and  by 
using \eqref{3.1},~\eqref{3.5}, and \eqref{3.6}:
\begin{align*}
|I_1|&\le C \de \|w\|_{r} \|v\|_s  \le  C \delta^{3/4} \|v\|_s, \\
 |I_2|&\le C\delta \|v+w\|_s \|v\|_s \le C \delta  \|v\|_s^2+C \delta  \|v\|_s,\\
 |I_3|&\le C \delta \|v+w\|_s^{2p+1} \|v\|_s \le  C\delta \|v\|_s^{2(p+1)} + C \delta  \|v\|_s,\\	
|I_5|&\le 	C   \|v\|_s^2.
\end{align*}
  We estimate   $I_4$ as follows
 $$
|I_4| \le C \delta^{1/2}  \|v\|_s^2+C \delta^{1/2}  \|w\|_{s+1}\|v\|_s\le C \delta^{1/2}  \|v\|_s^2+C \delta^{1/4}  \|v\|_s,
$$ In the last relation, we used again the  integration by parts, the 
identities~\eqref{3.1},~\eqref{3.5} and \eqref{3.6},  and the equality
\begin{align*}
\lag \p_{x_j} \varphi\, \p_{x_j} \p^\alpha v, \p^\alpha v \rag_{L^2} &=\frac12\lag \p_{x_j} \varphi   , \p_{x_j}|\p^\alpha v|^2 \rag_{L^2} =-\lag \p^2_{x_j} \varphi   ,  |\p^\alpha v|^2 \rag_{L^2}.
\end{align*}
Summing up inequalities \eqref{3.9} for all $\alpha\in \N^d$, $|\alpha|\le s$,
  combining the resulting
inequality with the estimates for $I_j$ and the Young inequality, and recalling
that~$\de \le  1$, we obtain
$$
 \p_t \| v \|^2_s    \le C\delta^{1/2}+  C(1+\delta^{1/2})\|v\|_s^2+ C\delta \|v\|_s^{2(p+1)} ,  \quad   t\le  \delta^{-1}\TT^\delta.
$$This inequality, together with \eqref{3.8} and  the Gronwall inequality, implies that
\begin{equation}\label{3.10}
\|v(t)\|_s^2 \le e^{C(1+\delta^{1/2})t} \left(C \delta^{1/2} t+ \|\psi_0-\psi_0^\delta\|_s^2+C\delta\int_0^t \|v(y)\|_s^{2(p+1)}\dd y\right)
\end{equation}
 for $   t\le  \delta^{-1}\TT^\delta$. 
Let us take $\delta_0\in (0,1)$ so small that, for $\delta<\delta_0$, 
\begin{gather} 
\|\psi_0-\psi_0^\delta\|_s^2< 1,\label{3.11}\\
e^{C(1+\delta^{1/2})}\left(C\delta^{1/2}+\|\psi_0-\psi_0^\delta\|_s^2\right)<\frac12,  \label{3.12}
\end{gather} 
 and denote
$$
\tau^\de=\sup\left\{t<  \delta^{-1}\TT^\delta: \|v(t)\|_s<1\right\}.
$$From \eqref{3.8} and \eqref{3.11}  it follows that   $ \tau^\de>0$ for $\delta<\delta_0$. Let us show that $\tau^\de>1$ provided that
\begin{equation}\label{3.13}
	\delta_0< \left(2Ce^{2C}\right)^{-1}.
\end{equation}
Assume, by contradiction, that $\tau^\de \le 1$.
 Let $t=\tau^\de$ in~\eqref{3.10}. By using~\eqref{3.12} and~\eqref{3.13}, we obtain
$$
1=\|v(\tau^\de)\|_s^2< \frac12+\frac12 \int_0^{\tau^\de}\|v(y)\|_s^{2(p+1)}\dd y\le 1.
$$ This contradiction shows that $   \tau^\de> 1$ for $\delta<\delta_0$, hence also $1< \delta^{-1}\TT^\delta$. 
Thus, property (a) is proved.   Taking $t=1$ in~\eqref{3.10}, we arrive at
$$
\|v(1)\|_s^2\le e^{C(1+\delta^{1/2})} \left(C\delta^{1/2}+ \|\psi_0-\psi_0^\delta\|_s^2+C\delta\right)\to 0 \quad \text{as $\delta\to 0^+$}.
$$This implies (b) and completes the proof in the case when $s>d/2$ is an   integer.

To derive properties (a) and (b)  in the general case, i.e., when
  $s\ge s_d$ is an arbitrary number, we use inequality~\eqref{3.10} for integer values of~$s$   and an interpolation argument.

 \section{Saturating subspaces}
\label{S:4}

 \begin{proof}[Proof of Proposition~\ref{P:2.5}] The proof is  divided into four steps.
 
  {\it Step~1.}  First, let us assume that  $\II\subset \Z^d_*$  is an arbitrary finite set,~$\HH_0(\II)=\HH(\II)$ is the subspace defied by~\eqref{2.4},     $\HH_j(\II)=\FF(\HH_{j-1}(\II))$ for~$j\ge1$, and $\HH_\ty(\II)$ is defined by \eqref{2.1}. 
  
  {\it Step~1.1.}  Let us show that, if 
$$
   	\cos\lag x,   m\rag, \, 	\sin\lag x,   m\rag    \in \HH_\ty(\II) \quad \text{for some $m\in \Z^d_*$},
$$then  
$$
   	\B(\cos\lag x,  m\rag),\,    	\B(\sin\lag x,  m\rag)     \in \HH_\ty(\II).
$$Indeed, assume that 
  \begin{equation}\label{4.1}
   \cos\lag x,   m\rag, \, \sin\lag x,   m\rag    \in   \HH_N(\II) \quad \text{ for some $N\ge0$}.
   \end{equation}
      The equalities
\begin{equation}\label{4.2}
    	\cos\lag x,2 m\rag =1-\frac 2{|m|^2} \B(\cos\lag x,  m\rag)=\frac 2{|m|^2} \B(\sin\lag x,  m\rag)-1,
\end{equation}   the assumptions       ${\bf 1} \in \HH(\II)$ and \eqref{4.1}, and the   definition of $\FF$ imply  that 
\begin{equation}\label{4.3}
 \cos\lag x,2 m\rag \in \HH_{N+1}(\II). 
\end{equation}As a  consequence of \eqref{4.2} and \eqref{4.3}, we   have
\begin{align*} 
   	\B(\cos\lag x,  m\rag)&= \frac{|m|^2}{2}(1-\cos\lag x,2 m\rag) \in \HH_{N+1}(\II),\\
    \B(\sin\lag x,  m\rag) &= \frac{|m|^2}{2}(1+\cos\lag x,2 m\rag)   \in \HH_{N+1}(\II),
   \end{align*}which imply the required result.
    
    {\it Step~1.2.} Let us  show that, if 
$$
   	\cos\lag x,   m\rag, \, 	\sin\lag x,   m\rag,\, 	\cos\lag x,   l\rag, \, 	\sin\lag x,   l\rag    \in \HH_\ty(\II) 
$$for some $m, l\in \Z^d_*$ such that 
 $m \not\perp l$, then
$$
   	\cos\lag x,   m+l\rag, \, 	\sin\lag x,   m+l\rag  \in \HH_\ty(\II). 
$$
   Indeed,  this follows immediately from the equalities
   \begin{align*}
   		\cos\lag x,   m+l\rag = & \, \pm\frac{1}{\lag m, l\rag}\Big(\B(\sin\lag x,   m\rag\pm\sin\lag x,   l\rag)+\B(\cos\lag x,   m\rag\mp\cos\lag x,   l\rag)\\&  - \B(\sin\lag x,   m\rag)-\B(\sin\lag x,   l\rag)-\B(\cos\lag x,   m\rag)-\B(\cos\lag x,   l\rag)\Big),\\
   		\sin\lag x,   m+l\rag = & \, \pm\frac{1}{\lag m, l\rag}\Big(\B(\sin\lag x,   m\rag\mp\cos\lag x,   l\rag)+\B(\cos\lag x,   m\rag\mp\sin\lag x,   l\rag)\\&  - \B(\sin\lag x,   m\rag)-\B(\sin\lag x,   l\rag)-\B(\cos\lag x,   m\rag)-\B(\cos\lag x,   l\rag)\Big)
   \end{align*}and the result of step~1.1.

   {\it Step~2.} Now, let us suppose that $\II\subset \Z^d_*$ is a finite set    such that, for any~$l,m\in \II$, there are vectors $\{n_j\}_{j=1}^k \subset  \II$ satisfying $l\not\perp n_1$, $n_{j}\not\perp n_{j+1}$, $j=1, \ldots, k-1,$ and $n_k\not\perp m$.~Let $N=\text{card}(\II)$ and $\II=\{m_1, \ldots, m_N\}$. Arguing by induction on  $N$,  we show in this step   
that \begin{equation}\label{4.4}
       	\cos\lag x, a_1 m_1+\ldots+ a_N m_N  \rag, \, 	\sin\lag x, a_1 m_1+\ldots+ a_N m_N   \rag  \in \HH_\ty(\II) 
       \end{equation}for any $a_1, \ldots, a_N\in \Z$.
   
   {\it Step~2.1.}
       Let   $ \II=\{m_1, m_2\}\subset \Z^d_*$ with $m_1\not \perp m_2$.       By the result of step~1.2, we have 
$$
       	\cos\lag x, a_1 m_1  \rag, \, 	\sin\lag x, a_1 m_1  \rag ,\, 	\cos\lag x,   a_2 m_2 \rag, \, 	\sin\lag x,   a_2 m_2   \rag \in \HH_\ty(\II)  
$$for any $a_1,a_2\in \Z$. Again, in view of step~1.2,  this   implies that 
$$
       	\cos\lag x, a_1 m_1+ a_2 m_2  \rag, \, 	\sin\lag x, a_1 m_1+ a_2 m_2   \rag  \in \HH_\ty(\II) 
    $$for any $a_1,a_2\in \Z$. 
         
         {\it Step~2.2.}~Assume that the required property is true  if the cardinality of the set~$\II$ is less or equal to $N-1$.~Let $\II\subset \Z^d_*$ be such that $N=\text{card}(\II)$ and~$\II=\{m_1, \ldots, m_N\}$. 
         Without loss of generality, we can assume $m_{N-1}\not \perp m_N$ and the set~$  \{m_1, \ldots, m_{N-1}\}$ satisfies the condition formulated in the beginning of   step~2.~Let us take any~$a_1, \ldots, a_N\in \Z$ and $k\ge1$ and write 
         \begin{align}
         a_1 m_1+\ldots+ a_N m_N& =\left(a_1 m_1+\ldots+ a_{N-2} m_{N-2}+ (a_{N-1}-k) m_{N-1}\right)\nonumber\\& \quad + \left(k  m_{N-1}+a_N m_N \right).\label{4.5}
         \end{align}Then,
        \begin{align*}
         \lag a_1 m_1+\ldots + (a_{N-1}-k) m_{N-1}, k  m_{N-1}+a_N m_N  \rag &=(a_{N-1}-k)k \|m_{N-1}\|^2\\ &\quad + O(k) \quad \text{as $k\to +\ty$.}
       \end{align*}As $m_{N-1}\neq 0$, for sufficiently large $k\ge1$, we have 
    \begin{align}
       a_1 m_1+\ldots+ a_{N-2} m_{N-2}+ (a_{N-1}-k) m_{N-1}\not\perp k  m_{N-1}+a_N m_N.\label{4.6}
\end{align}
Relation  \eqref{4.4} is proved by combining  \eqref{4.5} and \eqref{4.6}, the 
induction hypothesis, and the assumption that
$m_{N-1}\not \perp m_N$.
         
         {\it Step~3.}~We conclude from step~2 that,  if $\II\subset \Z^d_*$ is a   set satisfying  the conditions of Proposition~\ref{P:2.5}, then
$$
\cos\lag x,m\rag,\, \sin\lag x,m\rag \in \HH_\ty(\II)  \quad \text{for any $m\in \Z^d_*$.}
$$ This implies that $\HH_\ty(\II)$ is dense in   $C^r(\T^d;\R) $ for any $r\ge0$, hence $\HH(\II)$ is saturating.

  {\it Step~4.} Finally, let us assume  that the conditions of the proposition are not satisfied for~$\II \subset \Z^d_*$.  We   distinguish between two cases.
  
   {\it Step~4.1.}
If $\II$ is not a generator,    we can find a vector~$n\in \Z^d_*$   which does not belong   to the set $\tilde \II$ of  linear combinations of vectors of~$\II$ with integer coefficients. It~is easy to see  that 
   $$
   \HH_\ty(\II)\subset\text{span} \{  \sin \lag  x,  m\rag,\,  \cos\lag x,m \rag:\,\,   m\in  \tilde \II\}.
   $$
   Thus,  the functions   $\sin\lag x,n\rag$  and   $\cos\lag x,n\rag$   are orthogonal to the vector 
space~$\HH_\ty(\II)$ in  the Sobolev spaces $H^j(\T^d;\R)$ for any~$j\ge0$.~We conclude that     $\HH_\ty(\II)$ is 
not dense in~$C^r(\T^d;\R) $, thus   the subspace~$\HH (\II)$ is not saturating.   
   
      {\it Step~4.2.}  If $\II$ does not satisfy the second condition in the theorem, then it is of the form
      $$
      \II=\cup_{j=1}^k\{m_1^j, \ldots, m^j_{n_j}\},
      $$where $k\ge2$ and $m_{i_1}^{j_1} \bot m_{i_2}^{j_2}$ for any integers $1\le j_1<j_2\le k,$ $1\le i_1 \le 
n_{j_1},$ and~$1\le i_2 \le n_{j_2}$.~By using the arguments of the steps~1 and~2, it is easy to verify that    
the function   $\cos\lag x, m_1^{j_1}+m_2^{j_2}  \rag$   is orthogonal to~$\HH_\ty(\II)$ in~$H^j(\T^d;\R)$ for 
any~$j\ge0$. Thus, the space $\HH_\ty(\II)$ is not dense in~$C^r(\T^d;\R) $.
         \end{proof}
 
   \section{Growth of Sobolev norms}\label{S:5}

Let us  consider the   NLS equation
    \begin{align}
 	i \p_t \psi&=-\Delta \psi +V(x)\psi  +\kappa  |\psi|^{2p} \psi +\lag \eta(t),Q(x) \rag \psi,  \label{5.1}\\ 
 	\psi(0)&=\psi_0 \label{5.2}
 \end{align}  with  potential $V$   and parameters $d, p,\kappa$ as in the  previous sections. We   assume that   the   field $Q$ satisfies   Condition~{\rm(\hyperlink{H1}{H$_1$})}  and~$\eta$ is a random process  of the form~\eqref{0.6}
 with the following   condition satisfied for the  random variables $\{\eta_k\}$. We~denote~$J=[0,1]$ and $\EE=L^2(J; \R^q)$.
\begin{description}
   \item[\hypertarget{H2}]{\bf (H$_2$)} $\{\eta_k\}$ are independent   random variables in $\EE$ with common  law~$\ell$ such~that    
     $$ 
    \int_E \|y\|_\EE^2 \,\ell(\dd y)<\ty\quad\quad \text{and}  \quad \quad \supp\ell=\EE.
     $$ 
\end{description}
For example, this condition is satisfied   if  the random variables $\{\eta_k\}$ are of the~form
$$
\eta_k(t)=\sum_{j=1}^{+\infty} b_j\xi_{jk}e_j(t),  \quad  t\in J,
$$where 
$\{b_j\}$ are non-zero real numbers verifying  
$\sum_{j=1}^{+\infty} b_j^2<\infty,$ $\{e_j\}$ is an orthonormal basis in $\EE$, and $\{\xi_{jk}\}$ are independent real-valued random variables   whose law has a continuous density $\rho_j$ with respect to the
Lebesgue measure such that
$$
\int_{-\ty}^{+\ty} x^2  \rho_j(x) \,\dd x=1, \quad \rho_j(x)>0 \quad\text{for all $x\in\R$ and $j\ge1$.}
$$

By~Proposition~\ref{P:1.1}, the problem~\eqref{5.1},~\eqref{5.2} is locally well-posed in $H^s$ for any~$s>d/2$ up 
to some  (random)  maximal time $\TT=\TT(\psi_0, \eta)>0$.
  Let~$\pP_{\psi_0}$ be the probability measure corresponding to the trajectories issued from~$\psi_0$ (e.g.,~see~Section~1.3.1 in~\cite{KS-book}).
  Recall that $\sS$ is the unit sphere in $L^2$.

   \begin{theorem}\label{T:5.1}
Under the  Conditions~{\rm(\hyperlink{H1}{H$_1$})} and~{\rm(\hyperlink{H2}{H$_2$})}, 
    for any $s>s_d$ and any~$\psi_0\in H^s \cap \sS$, we~have
\begin{equation}\label{5.3}
\pP_{\psi_0}\left\{\limsup_{t\to \TT^-}\|\psi(t)\|_s=+\ty\right\}=1.
\end{equation}   
\end{theorem}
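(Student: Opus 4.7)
The plan is to combine the approximate controllability of Theorem~\ref{T:2.3} with the Markov property of the discrete-time chain $\{\psi(k)\}_{k\in\N}$ (induced by the piecewise i.i.d.\ structure of $\eta$ in~\eqref{0.6}) and the non-degeneracy Condition~(\hyperlink{H2}{H$_2$}). Since the blow-up alternative of Proposition~\ref{P:1.1} forces $\|\psi(t)\|_s\to+\ty$ whenever $\TT<\ty$, the target~\eqref{5.3} is equivalent to
$$
\pP_{\psi_0}\Big\{\sup_{t\in[0,\TT)}\|\psi(t)\|_s\le M\Big\}=0\quad\text{for every }M>0,
$$
after which a countable union over $M\in\N$ delivers~\eqref{5.3}.

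The key ingredient will be the uniform estimate
$$
c_M:=\sup_{\phi\in B_{H^s}(0,M)\cap\sS}\pP_\phi\Big\{\sup_{t\in J\cap[0,\TT)}\|\psi(t)\|_s\le M\Big\}<1.
$$
Granted $c_M<1$, the Markov property applied at integer times gives, by induction, $\pP_{\psi_0}\{\sup_{t\in[0,n]\cap[0,\TT)}\|\psi(t)\|_s\le M\}\le c_M^n\to 0$ as $n\to\ty$. The pointwise version of this estimate---positivity of the exceedance probability at each fixed $\phi$---is obtained as follows: Theorem~\ref{T:2.3} invoked with $\varkappa<1$ yields a deterministic control $u^*\in L^2(J_T;\R^q)$, $T\in(0,1)$, with $\|\RR_T(\phi,u^*)\|_s>2M$; extending $u^*$ arbitrarily to $J$ and applying Proposition~\ref{P:1.1}(ii) gives open neighborhoods $V\ni\phi$ in $H^s$ and $U\ni u^*$ in $\EE$ on which the inequality $\|\RR_T(\tilde\phi,\tilde u)\|_s>M$ persists; finally, Condition~(\hyperlink{H2}{H$_2$}) gives $\ell(U)>0$, so $\pP_{\tilde\phi}\{\sup_{t\in J}\|\psi(t)\|_s>M\}\ge\ell(U)>0$ for every $\tilde\phi\in V$.

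The main obstacle is promoting this pointwise positivity to the uniform bound $c_M<1$, because $B_{H^s}(0,M)$ is not compact in $H^s$. The plan is to circumvent this by working at a slightly lower regularity $s'\in(d/2,s)$: Rellich's embedding makes $B_{H^s}(0,M)$ compact in $H^{s'}$; the NLS remains locally well posed at that level by Proposition~\ref{P:1.1}; and Condition~(\hyperlink{H1}{H$_1$}) is regularity-independent, so Theorem~\ref{T:2.3} and the pointwise positivity argument above both carry over to $s'$. A contradiction argument---extracting an $H^{s'}$-convergent subsequence from a putative maximizing sequence and transferring the positivity to the limit via $H^{s'}$-continuity of the flow---should then yield the analogous uniform bound at regularity $s'$ and hence~\eqref{5.3} at that level; the embedding $\|\cdot\|_s\ge C\|\cdot\|_{s'}$ finally transfers the unboundedness from $H^{s'}$ back to $H^s$. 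Carrying out this compactness--regularity bookkeeping---and in particular ensuring that the strict inequality $\|\RR_T(\phi^*,u^*)\|_{s'}>M$ at the limit survives the passage to a neighborhood in $H^{s'}$---is the main technical point of the proof.
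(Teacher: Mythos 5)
Your strategy is essentially the paper's: pointwise positivity of the exceedance probability from Theorem~\ref{T:2.3}, Proposition~\ref{P:1.1}(ii) and Condition~(\hyperlink{H2}{H$_2$}), uniformity over a ball via compactness in a lower Sobolev norm, and then geometric decay through the Markov property at integer times. Two points in your plan need repair, however. First, you invoke the Markov property of $\{\psi(k)\}$ for the original equation, but this chain is not globally defined: solutions may blow up in finite random time, so before writing $\pP_{\psi(n-1)}\{\cdot\}$ inside a conditional expectation you need a genuine Markov process. The paper handles this by introducing the truncated equation~\eqref{5.4} with cutoff $\chi_R(\|\psi\|_s)$, $R>M$, proving global well-posedness and the Markov property for it (Proposition~\ref{P:5.2}), running the whole argument there, and transferring back to~\eqref{5.1} because the two systems coincide while the $H^s$-norm stays below $R$. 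Your blow-up-alternative reduction shows the issue only concerns bounded trajectories, so the fix is routine, but some such device (truncation or a cemetery state) must appear.

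Second, the final bookkeeping of your compactness step would fail as stated. If you prove the uniform bound with the exceedance measured in $H^{s'}$ and then try to get~\eqref{5.3} ``at level $s'$'' before transferring back, the iteration does not close: after one step you only know $\|\psi(1)\|_{s'}\le M$, which does not return the chain to the $H^s$-ball on which your uniform bound holds, and $B_{H^{s'}}(0,M)$ is not compact in any admissible space, so the same obstruction reappears one level down. The correct arrangement --- which is what the paper does, and which is already implicit in your own definition of $c_M$ --- is to keep both the ball and the exceedance event in $H^s$: compactness of $B_{H^s}(0,M)\cap\sS$ in $H^{s_d}$ together with $H^{s_d}$-stability of the flow gives a uniform positive lower bound on $\pP_\phi\{\sup_{t\in J}\|\psi^R(t)\|_{s_d}>M\}$, and the inequality $\|\cdot\|_{s_d}\le\|\cdot\|_s$ converts this into the bound for the $H^s$-event, i.e. $c_M<1$, so the iteration runs entirely at level $s$. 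Note also that Theorem~\ref{T:2.3} rests on Proposition~\ref{P:1.2}, which requires regularity at least $s_d$; hence your auxiliary index must be taken in $[s_d,s)$ (the paper takes $s'=s_d$), not merely in $(d/2,s)$ --- this is precisely why the theorem assumes $s>s_d$.
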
 
By the blow-up alternative,    equality \eqref{5.3} gives new information in the case~$\TT(\psi_0, \eta)=+\ty.$
\begin{proof}{\it Step~1.~Reduction.} 
Together with Eq.~\eqref{5.1}, let us   consider    the following truncated NLS equation:
 \begin{equation}\label{5.4}
 	i \p_t \psi=-\Delta \psi +V(x)\psi  +\kappa  \chi_R(\|\psi\|_s)  |\psi|^{2p} \psi +\lag \eta(t),Q(x) \rag \psi,   
 \end{equation}
where $R>0$ and $\chi_R \in C^\ty_0(\R)$ is  such that  $0\le \chi_R(x)\le 1 $ for~$x\in \R$ and $\chi_R(x)=1$ 
for~$|x|\le R$.~Let $\FF_k$, $k\ge1$ be the $\sigma$-algebra generated by the family~$\{\eta_j\}_{j=1}^{k}$. The 
problem~\eqref{5.4},~\eqref{5.2} is   globally well-posed. The following proposition is proved at the end of this 
section.
 \begin{proposition}\label{P:5.2}
 For any    $\psi_0\in  H^s$ and $R>0$, the~problem~\eqref{5.4},~\eqref{5.2}  has a  unique   solution $\psi^R \in C(\R_+;H^s)$. Moreover, the family   
 $$
 \left\{\psi^R(k+\cdot):J\to H^s\right\}_{k\ge 0}
 $$ defines   an $C(J;H^s)$-valued Markov process with respect to the filtration $\FF_{k+1}$.
 \end{proposition}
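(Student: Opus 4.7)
The plan is to establish global well-posedness of~\eqref{5.4},~\eqref{5.2} first, and then derive the Markov property from the deterministic dependence of the solution on the driving noise.

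\textbf{Global well-posedness.} Local existence and uniqueness in $C(J_T;H^s)$ for small $T$ is obtained by essentially the same contraction argument that underlies Proposition~\ref{P:1.1}; the extra cut-off factor $\chi_R(\|\psi\|_s)$ is globally Lipschitz in $\|\psi\|_s$, so the nonlinearity stays locally Lipschitz on $H^s$ and the fixed-point scheme goes through unchanged. The key additional observation is that the truncated nonlinearity is now \emph{globally} bounded in $H^s$: since $H^s$ is a Banach algebra for $s>d/2$ and $\chi_R$ is compactly supported,
\[
\bigl\|\chi_R(\|\psi\|_s)\,|\psi|^{2p}\psi\bigr\|_s \le C\,\chi_R(\|\psi\|_s)\,\|\psi\|_s^{2p+1} \le C_R
\]
uniformly in $\psi\in H^s$. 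Rewriting~\eqref{5.4} in Duhamel form with the free Schr\"odinger group (which is an isometry on $H^s$), and using that $V$ and $Q$ are smooth, one obtains
\[
\|\psi(t)\|_s \le \|\psi_0\|_s + C_R\,t + C\int_0^t\bigl(1+\|\eta(s)\|\bigr)\,\|\psi(s)\|_s\,\dd s.
\]
Since $\eta\in L^2_{loc}(\R_+;\R^q)$ almost surely, Gronwall excludes blow-up on any finite interval, and the blow-up alternative from Proposition~\ref{P:1.1} extends the solution to~$\R_+$.

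\textbf{Markov property.} On $[k,k+1)$ the driving noise equals $\eta_{k+1}(\cdot-k)$, and $\{\eta_k\}$ is i.i.d. By global well-posedness and uniqueness, there is a deterministic measurable map $\Phi:H^s\times\EE\to C(J;H^s)$ such that
\[
\psi^R(k+\cdot)=\Phi\bigl(\psi^R(k),\eta_{k+1}\bigr),\qquad k\ge0.
\]
By induction $\psi^R(k)$ is $\FF_k$-measurable, hence $\psi^R(k+\cdot)$ is $\FF_{k+1}$-measurable. For any bounded measurable $\Psi:C(J;H^s)\to\R$, using that $\eta_{k+2}$ is independent of $\FF_{k+1}$ while $\psi^R(k+1)$ is $\FF_{k+1}$-measurable,
\[
\E\bigl[\Psi(\psi^R(k+1+\cdot))\mid\FF_{k+1}\bigr]
=\int_\EE \Psi\bigl(\Phi(\psi^R(k+1),y)\bigr)\,\ell(\dd y).
\]
The right-hand side is a measurable function of $\psi^R(k+1)$, which is itself the endpoint of the current state $\psi^R(k+\cdot)$; this is exactly the Markov property of the $C(J;H^s)$-valued chain with respect to $\{\FF_{k+1}\}$.

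\textbf{Main obstacle.} The only genuinely non-routine point is the joint measurability (and, indeed, the continuous dependence) of the solution operator $\Phi$ in both of its arguments, which is what allows one to invoke independence of $\eta_{k+2}$ from $\FF_{k+1}$ rigorously in the conditional expectation above. I would extract this from the continuous dependence estimate in Proposition~\ref{P:1.1}(ii), read as a local Lipschitz bound for $\Phi$ on bounded subsets of $H^s\times\EE$, combined with the global $H^s$-bound derived above which confines trajectories to such bounded subsets.
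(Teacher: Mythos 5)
Your proposal is correct and follows essentially the same route as the paper: global well-posedness because the cut-off renders the nonlinearity harmless so the $H^s$-norm stays bounded on bounded intervals, and the Markov property obtained by writing $\psi^R(k+\cdot)$ as a deterministic (measurable) map of the state $\psi^R(k)$ and the noise $\eta_{k+1}$, then invoking independence of the future noises from $\FF_{k+1}$ and the fact that all $\eta_j$ share the law $\ell$. The only differences are presentational: you argue one step at a time and spell out the Duhamel--Gronwall bound and the joint measurability of the solution map, whereas the paper uses the $n$-step cocycle identity and the equality in law of $(\eta_1,\ldots,\eta_n)$ and $(\eta_{k+1},\ldots,\eta_{k+n})$ directly.
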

Let us fix any   $0<M<R$ and
consider the stopping time
$$
\tau_{M,R}=1+\min\left\{k\ge0: \|\psi^R(k+\cdot)\|_{C(J;H^s)}> M\right\},\quad \psi_0\in H^s,
$$where   the minimum over an empty set is equal to $+\ty$.~Assume   we have shown~that
\begin{equation}\label{5.5}
\pP_{\psi_0} \{\tau_{M,R} <\infty\}=1, \quad \psi_0\in H^s\cap \sS.
\end{equation} Since $R>M$, this   implies that  
\begin{equation}\label{5.6}
\pP_{\psi_0} \{\tau_{M} <\infty\}=1, \quad \psi_0\in H^s\cap \sS,
\end{equation} where 
$$
\tau_{M} =\min\left\{k\ge0: \sup_{t\in J, \, k+t<\TT }\|\psi(k+t)\|_s> M\right\}
$$and again the minimum over an empty set is   $+\ty$.~As  $M>0$  is arbitrary, we~conclude that \eqref{5.3} holds.

 {\it Step~2.~Proof of \eqref{5.5}.}~Assume that   there is an integer $l\ge 1$ such that
 \begin{equation}
 c=c(M,R)=\sup_{{\psi_0\in H^s\cap \sS}}\pP_{\psi_0} \left\{\tau_{M,R} 
>l\right\}<1.\label{5.7}
\end{equation}
 Combining this with  the Markov property, we obtain
\begin{align*}
\pP_{\psi_0}\left\{\tau_{M,R}  >n l\right\}&=\E_{\psi_0} \left(\I_{\{\tau_{M,R} >(n-1)l\}}\pP_{\phi}\{\tau_{M,R}   
>l\}|_{\phi= {\psi^R((n-1)l)}}\right)\nonumber\\&\le  c\, \pP_{\psi_0} \left\{\tau_{M,R} 
>(n-1)l\right\},
\end{align*} where $\E_{\psi_0}$ is the expectation corresponding to $\pP_{\psi_0}$.~Iterating this inequality, we~get
$$
\pP_{\psi_0} \left\{\tau_{M,R} >n l\right\}\le c^n. 
$$ This, together with
  the Borel--Cantelli lemma, implies   \eqref{5.5}. 

 {\it Step~3.~Proof of \eqref{5.7}.} By~Theorem~\ref{T:2.3}, for any~$\psi_0 \in   H^{s_d}\cap \sS$,    there is a       control $u\in \EE $     such~that
\begin{equation}\label{5.8}
\sup_{t\in J, \, t<\TT }\|\psi(t)\|_{s_d}> M.
\end{equation} On the other hand, Condition~{\rm(\hyperlink{H2}{H$_2$})} implies that
$$
\pP\left\{\|u-\eta\|_{\EE}<\de\right\}>0
$$for any $\delta>0$. 
Combining this with Proposition~\ref{P:1.1} and inequality~\eqref{5.8}, we see that there is a number $\delta>0$ such that 
$$
\inf_{\psi_0'\in B_{H^{s_d}}(\psi_0,\delta)\cap \sS}\pP_{\psi_0'}\left\{\sup_{t\in J, \, t<\TT' }\|\psi(t)\|_{s_d}> M\right\}>0,
$$    where $\TT'=\TT(\psi_0',\eta)$.
As $R>M$,  we also have
$$
\inf_{\psi_0'\in B_{H^{s_d}}(\psi_0,\delta)\cap \sS}\pP_{\psi_0'}\left\{\sup_{t\in J}\|\psi^R(t)\|_{s_d}> M\right\}>0.
$$ Since the ball $B_{H^{s}}(0,M)$ is compact in $H^{s_d}$ and $\|\cdot\|_{s_d}\le \|\cdot\|_s$, we derive that 
 $$
\inf_{\psi_0\in B_{H^{s}}(0,M)\cap \sS}\pP_{\psi_0}\left\{\sup_{t\in J}\|\psi^R(t)\|_s> M\right\}>0.
$$ 
The latter and the fact that 
$$
\pP_{\psi_0}\left\{\tau_{M,R}=1\right\}=1  \quad \text{if $\|\psi_0\|_s>M$}
$$
 imply   \eqref{5.7} with $l=1$~and 
$$
c=1-\inf_{\psi_0\in B_{H^{s}}(0,M)\cap \sS}\pP_{\psi_0}\left\{\sup_{t\in J}\|\psi^R(t)\|_s> M\right\}.
$$This completes the proof of    the theorem.
  \end{proof}
\begin{proof}[Proof of Proposition~\ref{P:5.2}]
 The  local well-posedness of \eqref{5.4}, \eqref{5.2} is proved by standard arguments.~As the $H^s$-norm of the 
solution remains bounded on any bounded interval, it can be extended to any $t>0$. For any $k\ge1$, let us denote by~$\psi_k(\psi_0, 
\eta_1, \ldots, \eta_k)$   the restriction of the solution of~\eqref{5.4},~\eqref{5.2} to the interval~$[k-1, k]$ (we skip the dependence on~$R$).  Then $\{\psi_k(\psi_0, \eta_1, \ldots, \eta_k)\}_{k\ge1}$ is a Markov 
process in~$C(J, H^s)$. Indeed,   we~have 
 $$
 \psi_{k+n}(\psi_0, \eta_1, \ldots, \eta_{k+n})= \psi_{n}(\psi_{k}(\psi_0, \eta_1, \ldots, \eta_k), \eta_{k+1}, \ldots, \eta_{k+n}).
 $$As $\{\eta_j\}_{j\ge k+1}$ is independent of $\FF_k$ and $\psi_k$ is $\FF_k$-measurable, the following equality holds
\begin{equation}
 \E   \left(f(\psi_{k+n}(\psi_0, \eta_1, \ldots, \eta_{k+n}))| \FF_k\right) =  \E f(\psi_{n}(\psi, \eta_{k+1}, \ldots, \eta_{k+n}))  \label{5.9}
\end{equation}
 for any bounded measurable function $f:C(J, H^s)\to \R$. Here, $\psi$ is the value  at time-1 of $ 
\psi_{k}(\psi_0, \eta_1, \ldots, \eta_k)$. The vectors $ (\eta_1, \ldots, \eta_n)$ and~$ (\eta_{k+1}, \ldots, 
\eta_{k+n})$ have the same law, so 
 $$
\E f(\psi_{n}(\psi, \eta_{k+1}, \ldots, \eta_{k+n})) =\E f(\psi_{n}(\psi, \eta_1, \ldots, \eta_n)). 
 $$Combining this and \eqref{5.9}, we arrive at the required result.
 \end{proof}

 \addcontentsline{toc}{section}{Bibliography}
 \bibliographystyle{alpha}

\newcommand{\etalchar}[1]{$^{#1}$}
\def\cprime{$'$} \def\cprime{$'$}
  \def\polhk#1{\setbox0=\hbox{#1}{\ooalign{\hidewidth
  \lower1.5ex\hbox{`}\hidewidth\crcr\unhbox0}}}
  \def\polhk#1{\setbox0=\hbox{#1}{\ooalign{\hidewidth
  \lower1.5ex\hbox{`}\hidewidth\crcr\unhbox0}}}
  \def\polhk#1{\setbox0=\hbox{#1}{\ooalign{\hidewidth
  \lower1.5ex\hbox{`}\hidewidth\crcr\unhbox0}}} \def\cprime{$'$}
  \def\polhk#1{\setbox0=\hbox{#1}{\ooalign{\hidewidth
  \lower1.5ex\hbox{`}\hidewidth\crcr\unhbox0}}} \def\cprime{$'$}
  \def\cprime{$'$} \def\cprime{$'$} \def\cprime{$'$}

\end{document}